\def\@email#1#2{%
 \endgroup
 \patchcmd{\titleblock@produce}
  {\frontmatter@RRAPformat}
  {\frontmatter@RRAPformat{\produce@RRAP{*#1\href{mailto:#2}{#2}}}\frontmatter@RRAPformat}
  {}{}
}%
\DeclareMathOperator{\Exp}{Exp} 
\DeclareMathOperator{\LN}{LN} 
\DeclareMathOperator{\Gam}{Gamma} 
\DeclareMathOperator{\GIG}{GIG} 
\DeclareMathOperator{\InvGamma}{IGamma} 
\DeclareMathOperator{\IG}{IG} 
\theoremstyle{plain}
\newtheorem{theorem}{Theorem}[section]
\newtheorem{lemma}{Lemma}[section]
\newtheorem{proposition}{Proposition}[section]
\newtheorem{corollary}{Corollary}[section]
\theoremstyle{definition}
\newtheorem{example}{Example}[section]
\theoremstyle{remark}
\newtheorem{remark}{Remark}[section]
\begin{document}


\title[Generalization of the double Pareto distribution]{Geometric Brownian motion with random observation time as generalization of the double Pareto distribution}
\author{K. Yamamoto}
 \email{yamamot@cs.u-ryukyu.ac.jp.}
 \affiliation{Faculty of Science, University of the Ryukyus, Nishihara, Okinawa 903-0213, Japan}
\author{T. Bando}%
\affiliation{ 
Production Engineering Department, SCM Division, Measurement Business Group, ANRITSU CORPORATION, Atsugi, Kanagawa 243-8555, Japan
}%

\author{H. Yanagawa}
\affiliation{%
Production Engineering Department, SCM Division, Measurement Business Group, ANRITSU CORPORATION, Atsugi, Kanagawa 243-8555, Japan
}%

\author{Y. Yamazaki}
\affiliation{%
School of Advanced Science and Engineering, Waseda University, Shinjuku, Tokyo 169-8555, Japan
}%

\date{}

\begin{abstract}
We study the probability distribution of the value of geometric Brownian motion at the stochastic observation time.
It is known that the exponentially distributed observation time yields the distribution called the double Pareto distribution, and this study aims to generalize this distribution.
First, we provide a calculation formula for the moment of the observed value of geometric Brownian motion using the moment-generating function of the observation time distribution.
Next, the probability density of the observed value of geometric Brownian motion is exactly derived under the observation time following the generalized inverse Gaussian distribution.
This result includes cases where the observation time follows the gamma, inverse gamma, and inverse Gaussian distributions, and can be regarded as a generalization of the double Pareto distribution.
\end{abstract}

\maketitle

\section{Introduction}
Stochastic differential equations have been extensively employed to describe and understand fluctuating or random phenomena, such as the Langevin equation in nonequilibrium statistical physics~\cite{Zwanzig}, the Schramm--Loewner evolution for critical phenomena~\cite{Katori}, and the Black--Scholes model in finance~\cite{Paul}.

The geometric Brownian motion~\cite{Oksendal} is a well-known stochastic process and is given by the stochastic differential equation
\begin{equation}
dS_t = \mu S_tdt+\sigma S_tdB_t,
\label{eq:GBM}
\end{equation}
where $B_t$ denotes Brownian motion and $\mu$ and $\sigma$ denote constants called drift and volatility in finance, respectively.
Stock prices in the Black--Scholes model are typically expressed by geometric Brownian motion~\cite{Paul}, and predictions of prices based on geometric Brownian motion have been attempted~\cite{Reddy, Abidin}.
In addition, the properties of geometric Brownian motion have been intensively investigated~\cite{Heston, Dufresne, Byczkowski, Biagini, Appleby}.

The solution of Eq.~\eqref{eq:GBM} under It\^o's interpretation is expressed as follows~\cite{Oksendal}:
\[
S_t=S_0\exp\left(\left(\mu-\frac{\sigma^2}{2}\right)t+\sigma B_t\right).
\]
For simplicity, we assume that $S_0$ is a positive constant and introduce $\tilde\mu\coloneqq \mu-\sigma^2/2$.
This solution implies that $S_t$ (at a given time $t$) follows the lognormal distribution $\LN(\ln S_0+\tilde\mu t, \sigma^2 t)$.
Here, a random variable $X$ is said to follow the lognormal distribution $\LN(\mu, \sigma^2)$ when the logarithm $\ln X$ is normally distributed with mean $\mu$ and variance $\sigma^2$, and the probability density function (PDF) of $X\sim\LN(\mu, \sigma^2)$ is given by
\[
f_{\LN}(x; \mu, \sigma^2)=\frac{1}{\sqrt{2\pi}\sigma x}\exp\left(-\frac{(\ln x-\mu)^2}{2\sigma^2}\right).
\]
See Crow and Shimizu~\cite{Crow} for the properties of the lognormal distribution.
The lognormal distribution has been observed in various natural and social phenomena~\cite{Limpert, Kobayashi, Yamamoto2016, Yamamoto2019}.

Suppose that the observation time is not a fixed constant $t$ but a random variable $T$, that is, the observation time varies from trial to trial, and we derive the distribution of $S_T$.
Let $G$ denote the cumulative distribution function of $T$.
The PDF of $S_T$ is expressed as
\begin{equation}
f_G(x)=\int_0^1 f_{\LN}(x; \ln S_0+\tilde\mu t, \sigma^2 t)dG(t).
\label{eq:general_f}
\end{equation}
In particular, when $T$ is exponentially distributed with the rate parameter $\lambda$, symbolically $T\sim\Exp(\lambda)$, the PDF of $S_T$ becomes
\[
f_{\Exp}(x)=\int_0^\infty \lambda e^{-\lambda t} f_\text{LN}(x;\ln S_0+\tilde\mu t, \sigma^2 t) dt
=\int_0^\infty \frac{\lambda e^{-\lambda t}}{\sqrt{2\pi t}\sigma x}\exp\left(-\frac{(\ln x-\ln S_0-\tilde\mu t)^2}{2\sigma^2t}\right)dt.
\]
This integral can be calculated using the following formula~\cite{Bateman}:
\begin{equation}
\int_0^\infty\frac{1}{\sqrt{t}}\exp\left(-pt-\frac{a}{4t}\right)dt=\sqrt{\frac{\pi}{p}}e^{-\sqrt{ap}}
\label{eq:DP_formula}
\end{equation}
for $a,p>0$.
This is the Laplace transform of the function $\exp(-a/(4t))/\sqrt{t}$ (from the function of $t$ to function of $p$), and this is a special case of Lemma~\ref{lemma1} stated in Section~\ref{sec3}.
Finally,
\begin{equation}
f_{\Exp}(x)=
\begin{dcases}
\frac{\lambda}{S_0\sqrt{\tilde{\mu}^2+2\sigma^2\lambda}}\left(\frac{x}{S_0}\right)^{-\alpha-1} & x\ge S_0,\\
\frac{\lambda}{S_0\sqrt{\tilde{\mu}^2+2\sigma^2\lambda}}\left(\frac{x}{S_0}\right)^{\beta-1} & x<S_0,
\end{dcases}
\label{eq:DP_f}
\end{equation}
is attained, where
\begin{equation}
\alpha=\frac{-\tilde\mu+\sqrt{\tilde\mu^2+2\sigma^2\lambda}}{\sigma^2}
\label{eq:DP_alpha}
\end{equation}
and
\begin{equation}
\beta=\frac{\tilde\mu+\sqrt{\tilde\mu^2+2\sigma^2\lambda}}{\sigma^2}.
\label{eq:DP_beta}
\end{equation}
The PDF $f_{\Exp}(x)$ is continuous over $x>0$ but not differentiable only at $x=S_0$.
The distribution of $S_T$ under $T\sim\Exp(\lambda)$ is referred to as the double Pareto distribution~\cite{Reed2004}; ``double Pareto'' means two power laws with exponents $-\alpha-1$ and $\beta-1$.
The double Pareto distribution is typically found in income~\cite{Reed2001} and microblog posting~\cite{Wang}, for example.
Other connections between the lognormal and power-law distributions can be found in a number of articles~\cite{Levy, Gabaix, Yamamoto2012, Yamamoto2014}.

The connection between geometric Brownian motion $S_t$ and the double Pareto distribution can also be formulated in the context of subordinated stochastic processes~\cite{}.
By introducing a stochastic clock $T(t)$, which is formally an increasing L\'evy process called the subordinator~\cite{Sato}, the subordinated process of $S_t$ is defined as the time-changed process $S_{T(t)}$.
An appropriate choice of $T(t)$ naturally yields anomalous diffusion~\cite{Metzler}, and subordinated processes have been applied to diverse systems such as nonequilibrium statistical physics~\cite{Barkai} and turbulence~\cite{Beck}.
When the subordinator $T(t)$ is what is called a gamma subordinator~\cite{Sato}, the PDF of $T(t)$ is
\[
g_{T(t)}(s)=\frac{\lambda^{kt}}{\Gamma(kt)}s^{kt-1}e^{-\lambda s},
\]
where $\Gamma$ is the gamma function.
That is, $T(t)$ follows the gamma distribution with the shape parameter $kt$ (time-dependent) and the rate parameter $\lambda$.
At time $t=k^{-1}$, i.e., $kt=1$, $T(k^{-1})$ follows the exponential distribution $\Exp(\lambda)$.
Thus, the double Pareto distribution can be regarded as the distribution of $S_{T(k^{-1})}$, where $T(t)$ is the gamma subordinator.

A possible extension of the double Pareto distribution is to consider $S_0$ a random variable.
In particular, when $S_0$ is lognormally distributed, the distribution of $S_T$ under $T\sim\Exp(\lambda)$ is called the double Pareto-lognormal distribution~\cite{Reed2004}.
The PDF of this distribution can be exactly calculated, having a complicated form compared with the double Pareto distribution~\eqref{eq:DP_f}.
The double Pareto-lognormal distribution has been used for human consumption~\cite{Toda} and telephone call~\cite{Seshadri}.
However, we do not treat random $S_0$ in this study and consider only the positive constant $S_0$.

In this study, we replace the exponential distribution of the observation time $T$ in the double Pareto distribution by the generalized inverse Gaussian (GIG) distribution.
We demonstrate that the PDF $f_{\GIG}(x)$ of $S_T$ can be expressed in a closed form by calculating the integral in Eq.~\eqref{eq:general_f} (see Theorem~\ref{thm2} in Section~\ref{sec3}).
The GIG distribution includes the exponential distribution as a special case.
Therefore, this result constitutes a generalization of the double Pareto distribution.
However, due to the complicated form of $f_{\GIG}(x)$, its cumulative distribution function $\int_0^x f_{\GIG}(y)dy$ does not appear to be expressed using existing special functions.
In this sense, we believe that the GIG distribution provides the maximal generalization of the double Pareto distribution that allows for the exact calculation of the PDF.
As special cases of the GIG distribution, the PDFs of $S_T$ are derived when $T$ follows the gamma, inverse gamma, and inverse Gaussian distributions, and their properties are investigated.
Prior to Theorem~\ref{thm2}, we provide in Theorem~\ref{thm1} the formula for the moment of $S_T$.
Using this formula, the $m$th moment $E[S_T^m]$ for $m=0,1,2,\ldots$ can be obtained even if the integral in Eq.~\eqref{eq:general_f} cannot be calculated and the distribution of $S_T$ is unknown.
For convenience, the list of notation used in this paper is summarized in Table~\ref{tbl1}.

As a related study, the authors recently derived the PDF and cumulative distribution function of $S_T$ when $T$ follows a continuous uniform distribution~\cite{Yamamoto2024}.
This previous study shares the question of what distribution of $T$ except for the exponential distribution yields an exact distribution of $S_T$.
However, this result is not a direct generalization of the double Pareto distribution, because the family of continuous uniform distributions does not contain exponential distributions.
Additionally, in relation to subordinated processes, $S_T$ investigated in this study corresponds to the subordinated process $S_{T(t)}$ at $t=1$ in which $T(t)$ is the GIG subordinator~\cite{Nielsen}.

\begin{table}[t]\centering
\caption{
List of notation used in this paper.
}
\begin{tabular}{ll}
\toprule
Symbol & Description \\
\colrule
$S_t$ & Geometric Brownian motion \\
$\mu$, $\sigma$ & Parameters (drift and volatility) of geometric Brownian motion\\
$\tilde\mu$ & $\tilde\mu\coloneqq\mu-\sigma^2/2$ \\
$T$ & Random observation time \\
$\lambda$ & Rate parameter of exponential and gamma distributions \\
$M_T(s)$ & Moment-generating function of $T$ \\
$\nu$, $\psi$, $\chi$ & Parameters of GIG distribution \\
$k$ & Shape parameter of gamma and inverse gamma distributions \\
$\theta$ & Scale parameter of inverse gamma distribution \\
$\tau$, $\omega$ & Parameters (mean and shape parameter) of inverse Gaussian distribution\\
\botrule
\end{tabular}
\label{tbl1}
\end{table}

\section{Calculation of moments for general observation time distribution}
In the calculation of the moment of $S_T$, i.e., $E[S_T^m]$ for $m=0,1,2,\ldots$, the moment of a lognormal variable is required.
\begin{proposition}[Moment of lognormal~\cite{Crow}]
The $m$th moment of the lognormal random variable $X\sim\LN(\mu, \sigma^2)$ is given by
\begin{equation}
E[X^m]=\exp\left(m\mu+\frac{m^2}{2}\sigma^2\right).
\label{eq:lognormal_moment}
\end{equation}
\label{prop1}
\end{proposition}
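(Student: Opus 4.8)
The plan is to exploit the defining relation between the lognormal and the normal distributions. By definition, if $X\sim\LN(\mu,\sigma^2)$ then $Y\coloneqq\ln X$ is normally distributed with mean $\mu$ and variance $\sigma^2$, so that $X=e^Y$ and $E[X^m]=E[e^{mY}]$. The right-hand side is precisely the moment-generating function of the normal variable $Y$ evaluated at the point $s=m$, and $\exp(m\mu+m^2\sigma^2/2)$ is the well-known closed form of the Gaussian moment-generating function. Thus the statement reduces to the standard evaluation of a Gaussian integral, which I would carry out explicitly for completeness rather than merely cite.

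Concretely, I would start from the integral definition $E[X^m]=\int_0^\infty x^m f_{\LN}(x;\mu,\sigma^2)\,dx$ and perform the substitution $y=\ln x$ (equivalently $x=e^y$, $dx=e^y\,dy$). The factor $1/x$ in the lognormal density cancels the Jacobian $e^y$, and the integral collapses to $\frac{1}{\sqrt{2\pi}\,\sigma}\int_{-\infty}^\infty \exp\!\big(my-(y-\mu)^2/(2\sigma^2)\big)\,dy$ taken over the whole real line.

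The only genuine computation is completing the square in the exponent. Writing $my-(y-\mu)^2/(2\sigma^2)=-\frac{1}{2\sigma^2}\big(y-(\mu+m\sigma^2)\big)^2+m\mu+\frac{m^2\sigma^2}{2}$, the remaining Gaussian integral in $y$ evaluates to $\sqrt{2\pi}\,\sigma$, which cancels the normalizing prefactor and leaves exactly $\exp(m\mu+m^2\sigma^2/2)$, as claimed. There is no real obstacle here; the one step requiring care is the algebraic completion of the square, in particular correctly tracking the constant term $m\mu+m^2\sigma^2/2$ that is pulled out of the integral and survives after the Gaussian factor is integrated away.
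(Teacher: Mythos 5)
Your proof is correct and follows essentially the same route as the paper: the paper's proof simply says the result is ``proved by directly calculating the integral'' $\int_0^\infty x^m f_{\LN}(x;\mu,\sigma^2)\,dx$, and your substitution $y=\ln x$ followed by completing the square is exactly that direct calculation, carried out in full. The algebra checks out, including the constant term $m\mu+m^2\sigma^2/2$ extracted from the completed square.
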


\begin{proof}
Proved by directly calculating the integral
\[
E[X^m]=\int_0^\infty x^m f_{\LN}(x; \mu, \sigma^2) dx=\int_0^\infty \frac{x^m}{\sqrt{2\pi}\sigma x}\exp\left(-\frac{(\ln x-\mu)^2}{2\sigma^2}\right)dx.
\]
\end{proof}

The mean and variance of $X\sim\LN(\mu, \sigma^2)$ are immediately calculated using Proposition~\ref{prop1}, as
\[
E[X] = \exp\left(\mu+\frac{\sigma^2}{2}\right),\quad
V[X] = e^{2\mu+\sigma^2}(e^{\sigma^2}-1),
\]
respectively.

\begin{theorem}[Moment formula for $S_T$]
The $m$th moment of $S_T$ becomes
\begin{equation}
E[S_T^m] = S_0^m M_T\left(m\tilde\mu+\frac{m^2}{2}\sigma^2\right)\quad (m=0,1,2,\ldots),
\label{eq:thm1}
\end{equation}
where $M_T(s)\coloneqq E[\exp(sT)]$ is the moment-generating function of the observation time $T$.
The moment $E[S_T^m]$ diverges if $s=m\tilde\mu+m^2\sigma^2/2$ lies outside the domain of $M_T(s)$.
\label{thm1}
\end{theorem}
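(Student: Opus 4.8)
The plan is to compute the moment by conditioning on the observation time $T$ and then applying the tower property of expectation. First I would fix $T=t$. As noted in the introduction, $S_t$ at a given time $t$ follows the lognormal distribution $\LN(\ln S_0+\tilde\mu t, \sigma^2 t)$, so Proposition~\ref{prop1} with mean $\ln S_0+\tilde\mu t$ and variance $\sigma^2 t$ gives the conditional moment
\[
E[S_T^m\mid T=t]=\exp\left(m(\ln S_0+\tilde\mu t)+\frac{m^2}{2}\sigma^2 t\right).
\]
Next I would factor out the $S_0$ dependence using $\exp(m\ln S_0)=S_0^m$ and collect the terms proportional to $t$, rewriting the right-hand side as $S_0^m\exp\left(\left(m\tilde\mu+\frac{m^2}{2}\sigma^2\right)t\right)$.

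Then I would integrate over the distribution of $T$ via the tower property, so that
\[
E[S_T^m]=E\!\left[E[S_T^m\mid T]\right]=S_0^m\,E\!\left[\exp\left(\left(m\tilde\mu+\frac{m^2}{2}\sigma^2\right)T\right)\right].
\]
Recognizing the remaining expectation as the moment-generating function $M_T(s)=E[\exp(sT)]$ evaluated at $s=m\tilde\mu+m^2\sigma^2/2$ immediately yields Eq.~\eqref{eq:thm1}.

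This argument is short, so the only genuine technical point is justifying the interchange of the inner lognormal integral with the outer integration over $T$ (equivalently, the use of the tower property together with Proposition~\ref{prop1}). Since $m=0,1,2,\ldots$ and $S_T>0$, the integrand $\exp\left(\left(m\tilde\mu+\frac{m^2}{2}\sigma^2\right)t\right)$ is nonnegative, so I expect Tonelli's theorem to make the interchange valid unconditionally, whether or not the resulting integral is finite. The divergence claim then follows from the same nonnegativity: the moment $E[S_T^m]$ is finite precisely when the MGF integral converges at $s=m\tilde\mu+m^2\sigma^2/2$, and it diverges to $+\infty$ exactly when this value of $s$ falls outside the domain of $M_T$. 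I anticipate this careful bookkeeping of the convergence region to be the main (though modest) obstacle, since everything else reduces to substituting Proposition~\ref{prop1} and simplifying the exponent.
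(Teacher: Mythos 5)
Your proposal is correct and follows essentially the same route as the paper: the paper writes $E[S_T^m]=\int_0^\infty x^m f_G(x)\,dx$, exchanges the integrals in the mixture representation of Eq.~\eqref{eq:general_f}, applies Proposition~\ref{prop1} to the inner lognormal integral, and identifies the remaining integral over $dG(t)$ as $M_T(m\tilde\mu+m^2\sigma^2/2)$ --- which is exactly your conditioning-plus-tower-property argument in analytic dress. Your explicit appeal to Tonelli's theorem and the nonnegativity argument for the divergence claim are small additions in rigor that the paper leaves implicit, but they do not change the structure of the proof.
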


\begin{proof}
We let $G$ denote the cumulative distribution function of $T$.
As the PDF of $S_T$ is given by $f_G(x)$ in Eq.~\eqref{eq:general_f}, the $m$th moment of $S_T$ is calculated to be
\begin{align*}
E[S_T^m]&=\int_0^\infty x^m f_G(x) dx\\
&=\int_0^1 \left(\int_0^\infty x^m f_\text{LN}(x; \ln S_0+\tilde\mu t,\sigma^2 t)dx\right) dG(t)\\
&=S_0^m\int_0^1 \exp\left(m\tilde\mu t+\frac{m^2}{2}\sigma^2 t\right)dG(t)\\
&=S_0^m M_T\left(m\tilde\mu+\frac{m^2}{2}\sigma^2\right).
\end{align*}
We note that Eq.~\eqref{eq:lognormal_moment} for the $m$th moment of the lognormal distribution is applied to calculate the integral of $x$.
\end{proof}

By substituting $m=1$ and $2$ into Eq.~\eqref{eq:thm1}, the mean and variance of $S_T$ are immediately obtained:
\begin{align*}
E[S_T]&=S_0 M_T\left(\tilde\mu+\frac{\sigma^2}{2}\right)=S_0 M_T(\mu),\\
V[S_T]&=S_0^2[M_T(2\tilde\mu+2\sigma^2)-M_T(\mu)^2]=S_0^2[M_T(2\mu+\sigma^2)-M_T(\mu)^2].
\end{align*}

The important point of Theorem~\ref{thm1} is that even if the distribution of $S_T$ is unknown, the moment $E[S_T^k]$ can be calculated from the moment-generating function of $T$ (not of $S_T$).

We provide examples of the moment calculation.
\begin{example}
When $T\sim\Exp(\lambda)$, $S_T$ follows the double Pareto distribution given by Eq.~\eqref{eq:DP_f}.
The moment-generating function of $T$ is $M_T(s)=\lambda/(\lambda-s)$, defined within $s<\lambda$.
From Theorem~\ref{thm1}, the $m$th moment $E[S_T^m]$ is finite when $m\tilde\mu+m^2\sigma^2/2<\lambda$.
By solving this inequality with respect to $m$, we find that $E[S_T^m]$ becomes finite when the order $m$ satisfies
\[
m<\frac{-\tilde\mu+\sqrt{\tilde\mu^2+2\sigma^2\lambda}}{\sigma^2}=\alpha,
\]
where $\alpha$ is introduced in Eq.~\eqref{eq:DP_alpha}.
Using Eq.~\eqref{eq:thm1}, the $m$th moment of $S_T$ for this $m$ value becomes
\[
E[S_T^m]=\frac{\lambda S_0^m}{\lambda-m\tilde\mu-m^2\sigma^2/2}.
\]
When $\lambda>\mu$, the mean of $S_T$ becomes
\[
E[S_T]=\frac{\lambda S_0}{\lambda-\mu},
\]
and when $\lambda>2\mu+\sigma^2$, the variance of $S_T$ becomes
\[
V[S_T]=\frac{\lambda S_0^2}{\lambda-2\mu-\sigma^2}-\left(\frac{\lambda S_0}{\lambda-\mu}\right)^2.
\]
Although these results can be obtained by directly calculating integrals involving Eq.~\eqref{eq:DP_f}, Theorem~\ref{thm1} allows significantly easier computations.
\label{ex1}
\end{example}

\begin{example}
When $T$ takes a constant $t_0>0$ almost surely, the moment-generating function of $T$ becomes $M_T(s)=\exp(t_0 s)$.
Using Eq.~\eqref{eq:thm1},
\[
E[S_T^m]=S_0^m \exp\left(m\tilde\mu t_0+\frac{m^2}{2}\sigma^2 t_0\right)=\exp\left(m(\ln S_0+\tilde\mu t_0)+\frac{m^2}{2}\sigma^2 t_0\right),
\]
which is identical to the $m$th moment of the lognormal distribution $\LN(\ln S_0+\tilde\mu t_0, \sigma^2 t_0)$.
This result is consistent with the property that $S_t$ at $t=t_0$ follows $\LN(\ln S_0+\tilde\mu t_0, \sigma^2 t_0)$.
\label{ex2}
\end{example}

\begin{example}
When $T$ follows a continuous uniform distribution on the interval $[0,t_\text{max}]$, the moment-generating function is $M_T(s)=(e^{t_\text{max}s}-1)/(t_\text{max}s)$ defined for all $-\infty<s<\infty$.
From Theorem~\ref{thm1}, the moment of $S_T$ in this case becomes
\[
E[S_T^m]=S_0^m\frac{2}{2m\tilde\mu t_\text{max}+m^2\sigma^2 t_\text{max}}\left[\exp\left(m\tilde\mu t_\text{max}+\frac{m^2\sigma^2 t_\text{max}}{2}\right)-1\right].
\]
Unlike Examples~\ref{ex1} and \ref{ex2}, the moment of $S_T$ of each order $m$ is finite for any parameter values.
This result is the same as reported in our previous paper~\cite{Yamamoto2024}, but Theorem~\ref{thm1} simplifies the calculation.
\end{example}

\section{Size distribution when observation time follows the GIG distribution}\label{sec3}
The moment of $S_T$ can be calculated using Eq.~\eqref{eq:thm1} for general $T$.
However, the PDF of $S_T$ is difficult to obtain because we must calculate the integral in Eq.~\eqref{eq:general_f}.
In this section, we show that the GIG distribution is a family of probability distributions for $T$ where the PDF of $S_T$ can be exactly calculated.

The GIG distribution $\GIG(\nu, \psi,\chi)$ is a three-parameter distribution ($\psi>0$, $\chi>0$, and $-\infty<\nu<\infty$), whose PDF is given by
\begin{equation}
g_{\GIG}(t; \nu, \psi,\chi)=\frac{(\psi/\chi)^{\nu/2}}{2K_\nu(\sqrt{\psi\chi})}t^{\nu-1}\exp\left(-\frac{\psi t}{2}-\frac{\chi}{2t}\right),
\label{eq:GIG_PDF}
\end{equation}
where $K_\nu$ denotes the modified Bessel function of the second kind~\cite{Olver}.
According to an encyclopedia~\cite{ESS}, this distribution was originally introduced by Halphen to analyze a monthly water flow in hydroelectric stations.
For the properties of the GIG distribution, refer to Refs.~\cite{Koudou, Jorgensen, Paolella}.
As subsequently explained, the GIG distribution includes gamma, inverse gamma, and inverse Gaussian distributions.

The following integral is essential for calculating the PDF $f_{\GIG}(x)$ of $S_T$ when $T$ follows the GIG distribution.
\begin{lemma}
For $a,p>0$,
\[
\int_0^\infty t^{\nu-1}\exp\left(-pt-\frac{a}{4t}\right)dt=2\left(\frac{a}{4p}\right)^{\nu/2}K_\nu(\sqrt{ap}).
\]
\label{lemma1}
\end{lemma}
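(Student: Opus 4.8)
The plan is to reduce the stated integral to a canonical integral representation of $K_\nu$ by a single scaling substitution, and then to read off the constant. First I would observe that, for $a,p>0$, the integrand behaves like $t^{\nu-1}e^{-a/(4t)}$ near $t=0^{+}$ (where the factor $e^{-a/(4t)}$ forces rapid decay for every real $\nu$) and like $t^{\nu-1}e^{-pt}$ as $t\to\infty$; hence the integral converges absolutely for all $\nu\in(-\infty,\infty)$, which is the full range of the GIG shape parameter.

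The key step is the geometric-mean rescaling $t=\sqrt{a/(4p)}\,u$, chosen precisely so that the two competing terms in the exponent are balanced. A short computation gives $pt=\tfrac{1}{2}\sqrt{ap}\,u$ and $a/(4t)=\tfrac{1}{2}\sqrt{ap}\,u^{-1}$, so the exponent becomes the symmetric expression $-\tfrac{1}{2}\sqrt{ap}\,(u+u^{-1})$, while the factor $t^{\nu-1}$ together with the Jacobian $dt$ contributes $(a/(4p))^{\nu/2}u^{\nu-1}\,du$. This turns the left-hand side into
\[
\int_0^\infty t^{\nu-1}\exp\left(-pt-\frac{a}{4t}\right)dt=\left(\frac{a}{4p}\right)^{\nu/2}\int_0^\infty u^{\nu-1}\exp\left(-\frac{\sqrt{ap}}{2}\left(u+\frac{1}{u}\right)\right)du.
\]

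To conclude I would invoke the standard representation $K_\nu(z)=\tfrac{1}{2}\int_0^\infty u^{\nu-1}\exp\left(-\tfrac{z}{2}(u+u^{-1})\right)du$, which follows from the $\cosh$-integral form of $K_\nu$ in Ref.~\cite{Olver} via $u=e^{\xi}$. Setting $z=\sqrt{ap}$ identifies the remaining $u$-integral with $2K_\nu(\sqrt{ap})$ and yields the claimed value $2(a/(4p))^{\nu/2}K_\nu(\sqrt{ap})$. I do not anticipate a real obstacle: the only delicate point is the bookkeeping of the powers of $a/(4p)$, since the exponent $(\nu-1)/2$ coming from $t^{\nu-1}$ must combine with the $+1/2$ from the Jacobian to give the half-power $\nu/2$. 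As a consistency check, the case $\nu=\tfrac{1}{2}$ reproduces the Laplace-transform formula~\eqref{eq:DP_formula} used for the exponential observation time, since $K_{1/2}(z)=\sqrt{\pi/(2z)}\,e^{-z}$.
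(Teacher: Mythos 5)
Your proof is correct, but it identifies the rescaled integral with $K_\nu$ by a genuinely different route than the paper does. Both arguments start from the same geometric-mean reduction: your substitution $t=\sqrt{a/(4p)}\,u$ is essentially the paper's step in Remark~\ref{remark:K}, where Lemma~\ref{lemma1} is shown to be equivalent to the one-parameter identity $\mathcal{K}_\nu(z)=K_\nu(z)$ with $z=\sqrt{ap}$. The difference is what you take as the known description of $K_\nu$. You invoke the cosh-type integral representation $K_\nu(z)=\frac{1}{2}\int_0^\infty u^{\nu-1}\exp\left(-\frac{z}{2}\left(u+u^{-1}\right)\right)du$ from Ref.~\cite{Olver}, so the final identification is a table lookup; your derivation of this representation via $u=e^{\xi}$ is also correct. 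The paper instead characterizes $K_\nu$ as the solution of the modified Bessel equation~\eqref{eq:Bessel1} with the asymptotic normalization~\eqref{eq:Bessel2}, and verifies both properties for the integral directly: the ODE by differentiation under the integral sign, integration by parts, and the symmetry $\mathcal{K}_{\nu-1}=\mathcal{K}_{1-\nu}$, and the asymptotics by Laplace's method. (The main text also points to a third, classical route via contour integration~\cite{Gray}.) Your approach buys brevity --- one substitution plus a standard formula --- at the cost of assuming a nontrivial integral representation as given; the paper's approach buys self-containedness, needing only the defining ODE and normalization of $K_\nu$, which is why it can stand as a complete proof in Appendix~\ref{apdx}. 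Your convergence observation for all real $\nu$ and the $\nu=1/2$ consistency check against Eq.~\eqref{eq:DP_formula} are both correct and are worthwhile additions.
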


This integral is the Laplace transform of the function $t^{\nu-1}\exp(-a/(4t))$ (from a function of $t$ to a function of $p$) and can be regarded as a generalization of Eq.~\eqref{eq:DP_formula}.
A standard proof of Lemma~\ref{lemma1} involves a contour integral in the complex plane~\cite{Gray}.
Although $K_\nu(z)$ is defined for complex values of $z$ and $\nu$, we consider only positive $z$ and real $\nu$ in this study, for which $K_\nu(z)$ always takes real values.
Further properties and discussions for $K_\nu(z)$ are presented in Appendix~\ref{apdx}.

Using Lemma~\ref{lemma1}, we obtain the PDF of $S_T$ in which $T$ follows the GIG distribution, which is the second main result of this study.
\begin{theorem}
The PDF of $S_T$ with $T\sim\GIG(\nu,\psi,\chi)$ becomes
\begin{align*}
f_{\GIG}(x)&=\frac{(\psi/\chi)^{\nu/2}}{\sqrt{2\pi}\sigma K_\nu(\sqrt{\psi\chi})S_0}\left(\frac{x}{S_0}\right)^{\tilde\mu/\sigma^2-1}\left(\frac{(\ln(x/S_0))^2}{\sigma^2}+\chi\right)^{(2\nu-1)/4}\left(\frac{\tilde\mu^2}{\sigma^2}+\psi\right)^{-(2\nu-1)/4}\\
&\quad K_{\nu-1/2}\left(\sqrt{\left(\frac{(\ln(x/S_0))^2}{\sigma^2}+\chi\right)\left(\frac{\tilde\mu^2}{\sigma^2}+\psi\right)}\right).
\end{align*}
\label{thm2}
\end{theorem}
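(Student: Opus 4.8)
The plan is to compute $f_{\GIG}(x)$ directly from Eq.~\eqref{eq:general_f} by inserting the GIG density and reducing the resulting $t$-integral to the form handled by Lemma~\ref{lemma1}. First I would write $dG(t)=g_{\GIG}(t;\nu,\psi,\chi)\,dt$ and form the explicit product of the two densities. Abbreviating $L\coloneqq\ln(x/S_0)$, the integrand carries a factor $t^{\nu-1}$ from the GIG density~\eqref{eq:GIG_PDF} and a factor $t^{-1/2}$ from the lognormal normalization, so the relevant power of $t$ is $t^{\nu-3/2}$; this is the origin of the shifted index that will ultimately produce $K_{\nu-1/2}$ rather than $K_\nu$.

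The key step is to reorganize the combined exponent. Expanding the square $(L-\tilde\mu t)^2=L^2-2L\tilde\mu t+\tilde\mu^2 t^2$ in the lognormal exponent and adding the $-\psi t/2-\chi/(2t)$ contributed by the GIG density, the exponent separates into a constant term $L\tilde\mu/\sigma^2$, a term linear in $t$ equal to $-\tfrac{t}{2}\bigl(\tilde\mu^2/\sigma^2+\psi\bigr)$, and a term in $1/t$ equal to $-\tfrac{1}{2t}\bigl(L^2/\sigma^2+\chi\bigr)$. Matching this against the integrand $\exp(-pt-a/(4t))$ of Lemma~\ref{lemma1} identifies $p=\tfrac12(\tilde\mu^2/\sigma^2+\psi)$ and $a=2(L^2/\sigma^2+\chi)$; both are positive because $\psi>0$ and $\chi>0$, so the lemma is applicable.

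With these substitutions the $t$-integral evaluates, by Lemma~\ref{lemma1} at index $\nu-1/2$, to $2\,(a/(4p))^{(\nu-1/2)/2}K_{\nu-1/2}(\sqrt{ap})$. Computing $ap=(L^2/\sigma^2+\chi)(\tilde\mu^2/\sigma^2+\psi)$ reproduces the argument of the Bessel function in the statement, and $a/(4p)=(L^2/\sigma^2+\chi)/(\tilde\mu^2/\sigma^2+\psi)$ reproduces the two power factors with exponents $\pm(2\nu-1)/4$. Finally I would absorb the constant prefactor $\exp(L\tilde\mu/\sigma^2)=(x/S_0)^{\tilde\mu/\sigma^2}$ together with the $1/x=(x/S_0)^{-1}/S_0$ arising from the lognormal density into the single factor $(x/S_0)^{\tilde\mu/\sigma^2-1}/S_0$, and cancel the factor $2$ from the lemma against the $2$ in the GIG normalization, which leaves exactly the stated prefactor $(\psi/\chi)^{\nu/2}/(\sqrt{2\pi}\,\sigma K_\nu(\sqrt{\psi\chi})S_0)$.

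The computation carries no conceptual obstacle once Lemma~\ref{lemma1} is in hand; the only place that demands care is the bookkeeping of the exponent and the index shift. Accordingly I would double-check the grouping of the coefficients of $t$ and $1/t$, verify that $p,a>0$ so the lemma genuinely applies, and confirm the identity $\nu-3/2=(\nu-1/2)-1$ so that the lemma is invoked at the correct order $\nu-1/2$.
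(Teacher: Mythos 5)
Your proposal is correct and follows exactly the paper's own route: substitute the GIG density into Eq.~\eqref{eq:general_f}, collect the powers of $t$ into $t^{\nu-3/2}$ and the exponent into $-\tfrac{t}{2}(\tilde\mu^2/\sigma^2+\psi)-\tfrac{1}{2t}((\ln(x/S_0))^2/\sigma^2+\chi)$, and evaluate the integral by Lemma~\ref{lemma1} at index $\nu-1/2$. All of your identifications ($p$, $a$, $ap$, $a/(4p)$, and the prefactor bookkeeping) match the paper's computation, so nothing further is needed.
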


\begin{proof}
Substituting Eq.~\eqref{eq:GIG_PDF} into Eq.~\eqref{eq:general_f}, we obtain
\begin{align*}
f_{\GIG}(x)&=\int_0^\infty f_{\LN}(x; \ln S_0+\tilde\mu t, \sigma^2t) g_{\GIG}(t; \nu, \psi,\chi)dt\\
&=\frac{(\psi/\chi)^{\nu/2}}{2\sqrt{2\pi}\sigma K_\nu(\sqrt{\psi\chi})x}\left(\frac{x}{S_0}\right)^{\tilde\mu/\sigma^2}\int_0^\infty t^{\nu-3/2} \exp\left(-\frac{1}{2}\left(\frac{\tilde\mu^2}{\sigma^2}+\psi\right)t-\frac{1}{2t}\left(\frac{(\ln(x/S_0))^2}{\sigma^2}+\chi\right)\right)dt.
\end{align*}
The proof is completed by calculating the integral using Lemma~\ref{lemma1}.
\end{proof}

\begin{remark}
Due to the complicated form of $f_{\GIG}(x)$, its cumulative distribution function $\int_0^x f_{\GIG}(y)dy$ does not appear to be expressed using existing special functions, as far as the authors have attempted to calculate.
Hence, the authors think that the GIG distribution is a marginal class for the distribution of $T$ such that the PDF of $S_T$ can be exactly calculated.
\end{remark}

\subsection{Observation time with gamma distribution}
As a special case of the GIG distribution, we substitute $\nu=k$, $\psi=2\lambda$, and $\chi\to0+$ in $g_{\GIG}(t; \nu,\psi,\chi)$ in Eq.~\eqref{eq:GIG_PDF}.
Caution is required when taking the $\chi\to0+$ limit.
Using Lemma~\ref{lemma1} with $p=\psi$ and $a=\chi$, Eq.~\eqref{eq:GIG_PDF} can be expressed as follows:
\begin{equation}
g_{\GIG}(t;\nu,\psi,\chi)=2^{-\nu}t^{\nu-1}\exp\left(-\frac{\psi t}{2}-\frac{\chi}{2t}\right)
\left[\int_0^\infty u^{\nu-1}\exp\left(-\psi u-\frac{\chi}{4u}\right)du\right]^{-1}.
\label{eq:gamma_PDF}
\end{equation}
This expression allows us to take the $\chi\to0+$ limit directly and obtain
\[
g_{\GIG}(t; k, 2\lambda, 0)=\frac{\lambda^k}{\Gamma(k)}t^{k-1}e^{-\lambda t}.
\]
This is the PDF of the gamma distribution with the shape parameter $k$ and the rate parameter $\lambda$. (The scale parameter is $1/\lambda$.)
In this study, we use $\Gam(k, \lambda)$ for this gamma distribution.
The gamma distribution is reduced to an exponential distribution when $k=1$, i.e., $\GIG(1,2\lambda,0)=\Gam(1,\lambda)=\Exp(\lambda)$.
Another special case $\Gam(\nu/2,2)$ is known as the chi-squared distribution.
The gamma distribution appears in a great variety of phenomena~\cite{Sharma, Wright, Park, Yamamoto2024_1}.

As a corollary of Theorem~\ref{thm2}, the PDF of $S_T$ under $T\sim\Gam(k, \lambda)$ is obtained.
\begin{corollary}
The PDF of $S_T$ with $T\sim\Gam(k, \lambda)$ becomes
\begin{equation}
f_{\Gam}(x)=\frac{2\lambda^k}{\sqrt{2\pi}\Gamma(k)\sigma(\tilde\mu^2+2\sigma^2\lambda)^{(2k-1)/4} S_0}\left(\frac{x}{S_0}\right)^{\tilde\mu/\sigma^2-1}\left|\ln\frac{x}{S_0}\right|^{k-1/2}K_{k-1/2}\left(\frac{\sqrt{\tilde\mu^2+2\sigma^2\lambda}}{\sigma^2}\left|\ln\frac{x}{S_0}\right|\right).
\label{eq:gamma_f}
\end{equation}
\end{corollary}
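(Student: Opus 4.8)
The plan is to read the corollary as the $\chi\to0+$ specialization of Theorem~\ref{thm2}. Since the $\Gam(k,\lambda)$ density arises from $g_{\GIG}(t;\nu,\psi,\chi)$ by setting $\nu=k$, $\psi=2\lambda$ and letting $\chi\to0+$, I would substitute $\nu=k$ and $\psi=2\lambda$ into the closed form for $f_{\GIG}(x)$ and then pass to the limit $\chi\to0+$, tracking every factor that depends on $\chi$. Three such factors appear: the normalizing prefactor $(\psi/\chi)^{\nu/2}/K_\nu(\sqrt{\psi\chi})$, the power factor $\bigl((\ln(x/S_0))^2/\sigma^2+\chi\bigr)^{(2\nu-1)/4}$, and the argument of $K_{\nu-1/2}$. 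The latter two contain $\chi$ only additively, so their limits are immediate; the prefactor is the delicate one.

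The main obstacle is that as $\chi\to0+$ both $(\psi/\chi)^{\nu/2}$ and $K_\nu(\sqrt{\psi\chi})$ diverge, so their ratio is an indeterminate form. I would resolve it with the small-argument asymptotic of the modified Bessel function, $K_\nu(z)\sim\tfrac12\Gamma(\nu)(z/2)^{-\nu}$ as $z\to0+$, which is valid for $\nu>0$ and here $k>0$. Substituting $z=\sqrt{\psi\chi}$ gives $K_\nu(\sqrt{\psi\chi})\sim\tfrac12\Gamma(\nu)2^{\nu}(\psi\chi)^{-\nu/2}$, and the divergent $\chi^{-\nu/2}$ factors cancel against those in $(\psi/\chi)^{\nu/2}$, leaving the finite limit
\begin{equation*}
\lim_{\chi\to0+}\frac{(\psi/\chi)^{\nu/2}}{K_\nu(\sqrt{\psi\chi})}=\frac{2}{\Gamma(\nu)}\Bigl(\frac{\psi}{2}\Bigr)^{\nu}=\frac{2\lambda^{k}}{\Gamma(k)},
\end{equation*}
which is exactly the prefactor demanded by Eq.~\eqref{eq:gamma_f}.

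With the prefactor settled, the remaining limits are routine. Setting $\chi=0$ turns $\bigl((\ln(x/S_0))^2/\sigma^2+\chi\bigr)^{(2\nu-1)/4}$ into $|\ln(x/S_0)|^{(2k-1)/2}\sigma^{-(2k-1)/2}$, the Bessel index becomes $k-1/2$, and its argument collapses to $\sqrt{\tilde\mu^2+2\sigma^2\lambda}\,|\ln(x/S_0)|/\sigma^2$. I would then collect the powers of $\sigma$: this power factor contributes $\sigma^{-(2k-1)/2}$, while $(\tilde\mu^2/\sigma^2+2\lambda)^{-(2k-1)/4}=\sigma^{(2k-1)/2}(\tilde\mu^2+2\sigma^2\lambda)^{-(2k-1)/4}$ contributes $\sigma^{(2k-1)/2}$; these two cancel, leaving $(\tilde\mu^2+2\sigma^2\lambda)^{(2k-1)/4}$ in the denominator and $|\ln(x/S_0)|^{k-1/2}$ (using $(2k-1)/2=k-1/2$) in the numerator, which reproduces Eq.~\eqref{eq:gamma_f}.

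Finally, I would note an alternative route that sidesteps the Bessel asymptotics altogether: since $g_{\GIG}(t;k,2\lambda,0)$ is literally the $\Gam(k,\lambda)$ density, one may instead insert this density into Eq.~\eqref{eq:general_f} and evaluate the resulting integral directly with Lemma~\ref{lemma1}, now applied with index $\nu=k-1/2$ and parameters $p=\tfrac12(\tilde\mu^2/\sigma^2+2\lambda)$ and $a/4=\tfrac12(\ln(x/S_0))^2/\sigma^2$. This yields the same closed form without passing through any limit and serves as an independent check on the computation.
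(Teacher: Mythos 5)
Your main argument coincides with the paper's own proof: the paper likewise specializes Theorem~\ref{thm2} at $\nu=k$, $\psi=2\lambda$, $\chi\to0+$, and resolves the indeterminate prefactor $(\psi/\chi)^{\nu/2}/K_\nu(\sqrt{\psi\chi})$ via the small-argument asymptotic $K_\nu(z)\simeq\tfrac12\Gamma(\nu)(z/2)^{-\nu}$ of Proposition~\ref{prop:K}(c), arriving at the same limit $2\lambda^k/\Gamma(k)$. Your appended alternative route---inserting the gamma density directly into Eq.~\eqref{eq:general_f} and applying Lemma~\ref{lemma1} with index $k-1/2$, $p=\tfrac12(\tilde\mu^2/\sigma^2+2\lambda)$, $a/4=\tfrac12(\ln(x/S_0))^2/\sigma^2$---is also correct and would even sidestep the implicit interchange of the $\chi\to0+$ limit with the integral, but the proof you actually give is essentially the paper's.
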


\begin{proof}
The PDF $f_{\Gam}(x)$ is obtained by substituting $\nu=k$, $\psi=2\lambda$, and $\chi\to0+$ into Theorem~\ref{thm2}.
In taking the $\chi\to0+$ limit, we use Proposition~\ref{prop:K}(c) in Appendix~\ref{apdx} with $\nu=k>0$ as
\begin{equation}
\frac{\chi^{-\nu/2}}{K_\nu(\sqrt{\psi\chi})}\simeq\frac{2\chi^{-\nu/2}}{\Gamma(\nu)(\sqrt{\psi\chi}/2)^{-\nu}}=\frac{\psi^{\nu/2}}{2^{\nu-1}\Gamma(\nu)}.
\label{eq:gamma_proof}
\end{equation}
\end{proof}

By substituting $k=1$ and using Proposition~\ref{prop:K}(d) regarding $K_{1/2}(z)$, $f_{\Gam}(x)$ is reduced to the double Pareto distribution in Eq.~\eqref{eq:DP_f}.
Therefore, $f_{\Gam}(x)$ is a generalization of the double Pareto distribution.

\begin{remark}\label{remark:gamma}
Equation~\eqref{eq:gamma_f} can be expressed as
\begin{align*}
f_{\Gam}(x)&=\frac{2(\lambda/\sigma^2)^k}{\sqrt{2\pi}\Gamma(k)(2(\lambda/\sigma^2)+(\tilde\mu/\sigma^2)^2)^{(2k-1)/4}S_0}\left(\frac{x}{S_0}\right)^{\tilde\mu/\sigma^2-1}\left|\ln\frac{x}{S_0}\right|^{k-1/2}\\
&\quad K_{k-1/2}\left(\sqrt{2\frac{\lambda}{\sigma^2}+\left(\frac{\tilde\mu}{\sigma^2}\right)^2}\left|\ln\frac{x}{S_0}\right|\right).
\end{align*}
Although $f_{\Gam}(x)$ has five parameters $k$, $\lambda$, $\tilde\mu$, $\sigma$, and $S_0$, we can reduce them to four parameters $k$, $\lambda/\sigma^2$, $\tilde\mu/\sigma^2$, and $S_0$.
Moreover, $S_0$ can be eliminated by considering $x/S_0$ instead of $x$.
\end{remark}

The moment-generating function of $T\sim\Gam(k,\lambda)$ is
\[
M_T(s)=\left(\frac{\lambda}{\lambda-s}\right)^k
\]
for $s<\lambda$.
Owing to Theorem~\ref{thm1},
\[
E[S_T^m]=S_0^m\left(\frac{\lambda}{\lambda-m\tilde\mu-m^2\sigma^2/2}\right)^k,
\]
and this moment becomes finite if
\[
m\tilde\mu+\frac{m^2}{2}\sigma^2<\lambda.
\]
The shape parameter $k$ does not affect whether the moment is finite.
The mean and variance of $S_T$ become
\[
E[S_T]=S_0\left(\frac{\lambda}{\lambda-\mu}\right)^k,\quad
V[S_T]=S_0^2\left[\left(\frac{\lambda}{\lambda-2\mu-\sigma^2}\right)^k-\left(\frac{\lambda}{\lambda-\mu}\right)^{2k}\right].
\]

Using the asymptotic form of $K_\nu(z)$ presented in Proposition~\ref{prop:K}(b) and (c) in Appendix~\ref{apdx}, we obtain the following asymptotic forms of $f_{\Gam}(x)$.
\begin{proposition}[Asymptotic form of $f_{\Gam}(x)$]
\begin{enumerate}
\renewcommand{\labelenumi}{$\mathrm{(\alph{enumi})}$}
\item Asymptotic form in the limit $x\to\infty$:
\[
f_{\Gam}(x)\simeq\frac{\lambda^k}{\Gamma(k)(\tilde\mu^2+2\sigma^2\lambda)^{k/2}S_0}\left(\frac{x}{S_0}\right)^{-\alpha-1}\left(\ln\frac{x}{S_0}\right)^{k-1} \quad(x\to\infty),
\]
where $\alpha$ is the same exponent as that given in Eq.~\eqref{eq:DP_alpha}.
\item Asymptotic form in the limit $x\to0+$:
\[
f_{\Gam}(x)\simeq\frac{\lambda^k}{\Gamma(k)(\tilde\mu^2+2\sigma^2\lambda)^{k/2}S_0}\left(\frac{x}{S_0}\right)^{\beta-1}\left|\ln\frac{x}{S_0}\right|^{k-1} \quad(x\to0+),
\]
where the exponent $\beta$ is given by Eq.~\eqref{eq:DP_beta}.
\item Asymptotic form in the limit $x\to S_0$:
\[
f_{\Gam}(x)\simeq
\begin{dcases}
\frac{2^{k-1}\lambda^k\sigma^{2(k-1)}\Gamma(k-1/2)}{\sqrt{\pi}\Gamma(k)(\tilde\mu^2+2\sigma^2\lambda)^{k-1/2}S_0} & k>\frac{1}{2},\\
\frac{2^{-k}\lambda^k\Gamma(1/2-k)}{\sqrt{\pi}\Gamma(k)\sigma^{2k}S_0}\left|\ln\frac{x}{S_0}\right|^{2k-1} & k<\frac{1}{2},\\
-\frac{\sqrt{2\lambda}}{\pi\sigma S_0}\ln\left|\ln\frac{x}{S_0}\right| & k=\frac{1}{2}
\end{dcases}
\quad(x\to S_0).
\]
Therefore, $f_{\Gam}(x)$ is finite at $x=S_0$ if $k>1/2$, and $\lim_{x\to S_0}f_{\Gam}(x)$ diverges if $k\le1/2$, irrespective of the other parameters $\tilde\mu$, $\sigma$, $\lambda$, and $S_0$.
\end{enumerate}
\label{prop:gamma}
\end{proposition}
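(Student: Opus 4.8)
The plan is to read all three asymptotics off the single closed form for $f_{\Gam}(x)$ in Eq.~\eqref{eq:gamma_f}, whose only nontrivial factor is the Bessel term $K_{k-1/2}(z)$ with argument $z=c\left|\ln(x/S_0)\right|$, where $c\coloneqq\sqrt{\tilde\mu^2+2\sigma^2\lambda}/\sigma^2>0$. The structural observation to exploit is that $z$ is a fixed positive constant times $\left|\ln(x/S_0)\right|$, so $z\to\infty$ in both limits $x\to\infty$ and $x\to0+$ (cases (a) and (b)), whereas $z\to0+$ in the limit $x\to S_0$ (case (c)). Accordingly, (a) and (b) are governed by the large-argument asymptotic of $K_\nu$ in Proposition~\ref{prop:K}(b), and (c) by the small-argument asymptotic in Proposition~\ref{prop:K}(c); the three subcases in (c) will be dictated by the sign of the order $\nu=k-1/2$.

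For (a) and (b) I would substitute $K_{k-1/2}(z)\simeq\sqrt{\pi/(2z)}\,e^{-z}$. The key simplification is that the exponential $e^{-z}$ becomes a pure power of $x/S_0$: in the limit $x\to\infty$ one has $\left|\ln(x/S_0)\right|=\ln(x/S_0)$ and $e^{-z}=(x/S_0)^{-c}$, while in the limit $x\to0+$ one has $\left|\ln(x/S_0)\right|=-\ln(x/S_0)$ and $e^{-z}=(x/S_0)^{c}$. Combining this power with the prefactor $(x/S_0)^{\tilde\mu/\sigma^2-1}$ and using the definitions of $\alpha$ and $\beta$ in Eqs.~\eqref{eq:DP_alpha} and \eqref{eq:DP_beta}, namely $\tilde\mu/\sigma^2-1-c=-\alpha-1$ and $\tilde\mu/\sigma^2-1+c=\beta-1$, produces the stated exponents. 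The factor $z^{-1/2}$ from the asymptotic reduces $\left|\ln(x/S_0)\right|^{k-1/2}$ to $\left|\ln(x/S_0)\right|^{k-1}$, and collecting the surviving constants (the $\sqrt{\pi/2}$ against the $2/\sqrt{2\pi}$ in the prefactor, and the power $c^{-1/2}$ against $(\tilde\mu^2+2\sigma^2\lambda)^{(2k-1)/4}$) yields the common constant $\lambda^k/[\Gamma(k)(\tilde\mu^2+2\sigma^2\lambda)^{k/2}S_0]$. This is routine once the exponent bookkeeping is fixed.

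For (c) I would set $x\to S_0$, so that $z\to0+$ and $(x/S_0)^{\tilde\mu/\sigma^2-1}\to1$, and split on the sign of $\nu=k-1/2$. For $k>1/2$ ($\nu>0$) I use $K_{k-1/2}(z)\simeq\tfrac12\Gamma(k-1/2)(z/2)^{-(k-1/2)}$ from Proposition~\ref{prop:K}(c); the factor $z^{-(k-1/2)}$ cancels $\left|\ln(x/S_0)\right|^{k-1/2}$ exactly, leaving a finite limit. For $k<1/2$ ($\nu<0$) I first invoke the parity $K_{k-1/2}=K_{1/2-k}$ and then apply the same small-argument form with positive order $1/2-k$; now the two logarithmic factors \emph{add} to $\left|\ln(x/S_0)\right|^{2k-1}$, which blows up since $2k-1<0$. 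In both subcases the powers of $(\tilde\mu^2+2\sigma^2\lambda)$ and $\sigma$ must be recombined carefully, and one should note in particular that they cancel completely in the $k<1/2$ answer.

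The main obstacle is the borderline case $k=1/2$ ($\nu=0$), where the power-law small-argument form degenerates and must be replaced by the logarithmic behavior $K_0(z)\simeq-\ln z$. Here $z=c\left|\ln(x/S_0)\right|$, so $-\ln z=-\ln c-\ln\left|\ln(x/S_0)\right|$, and since $\left|\ln(x/S_0)\right|\to0$ the double logarithm $-\ln\left|\ln(x/S_0)\right|$ dominates the bounded constant $-\ln c$; keeping only this leading term, together with the prefactor which collapses to $\sqrt{2\lambda}/(\pi\sigma S_0)$ at $k=1/2$ (using $\Gamma(1/2)=\sqrt\pi$ and the vanishing exponent $(2k-1)/4=0$), gives the claimed $-\frac{\sqrt{2\lambda}}{\pi\sigma S_0}\ln\left|\ln(x/S_0)\right|$. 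Finally, the concluding finiteness assertion follows by inspection of the three subcases: the limit at $x=S_0$ is finite exactly when $k>1/2$ and diverges — algebraically for $k<1/2$ and doubly logarithmically for $k=1/2$ — whenever $k\le1/2$, independently of the remaining parameters $\tilde\mu$, $\sigma$, $\lambda$, and $S_0$.
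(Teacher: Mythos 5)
Your proposal is correct and follows essentially the same route as the paper's proof: parts (a) and (b) via the large-argument asymptotic of $K_\nu$ (Proposition~\ref{prop:K}(b)) with the exponential factor rewritten as a power of $x/S_0$ to produce the exponents $\alpha$ and $\beta$, and part (c) via the small-argument asymptotic (Proposition~\ref{prop:K}(c)) together with the parity $K_{k-1/2}=K_{1/2-k}$ when $k<1/2$. Your explicit treatment of the borderline case $k=1/2$ (where the $\nu=0$ logarithmic form gives the dominant double logarithm) and your constant bookkeeping are both accurate, merely spelling out details the paper leaves implicit.
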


\begin{proof}
Both (a) and (b) can be obtained by using Proposition~\ref{prop:K}(b) in Appendix~\ref{apdx} because $x\to\infty$ and $x\to0+$ correspond to $|\ln(x/S_0)|\to\infty$.
In the computation, we use
\[
\exp\left(-\frac{\sqrt{\tilde\mu^2+2\sigma^2\lambda}}{\sigma^2}\left|\ln\frac{x}{S_0}\right|\right)
=\begin{dcases}
\left(\frac{x}{S_0}\right)^{-\sqrt{\tilde\mu^2+2\sigma^2\lambda}/\sigma^2} & x>S_0,\\
\left(\frac{x}{S_0}\right)^{\sqrt{\tilde\mu^2+2\sigma^2\lambda}/\sigma^2} & x<S_0.
\end{dcases}
\]
We apply the $x>S_0$ case to the $x\to\infty$ limit and the $x<S_0$ case to the $x\to0+$ limits, and this difference yields the different exponents $\alpha$ and $\beta$ in (a) and (b).

(c) can be obtained from Proposition~\ref{prop:K}(c).
However, we cannot apply it directly when $k<1/2$ ($k-1/2<0$) and need to use Proposition~\ref{prop:K}(a) as $K_{k-1/2}=K_{-k+1/2}$ in this case.
\end{proof}

From Proposition~\ref{prop:gamma}(b), $\lim_{x\to0+}f_{\Gam}(x)=0$ for $\beta>1$ and $\lim_{x\to0+}f_{\Gam}(x)=\infty$ for $\beta<1$.
The inequality $\beta\gtrless 1$ is equivalent to
\begin{equation}
\frac{\tilde\mu}{\sigma^2}+\frac{\lambda}{\sigma^2}\gtrless \frac{1}{2}.
\label{eq:gamma_inequality}
\end{equation}
When $\beta=1$, $\lim_{x\to0+}f_{\Gam}(x)=0$ for $k>1$, $\lim_{x\to0+}f_{\Gam}(x)=\infty$ for $k<1$, and
\[
f_{\Gam}(0)=\frac{\lambda}{\sqrt{\tilde\mu^2+2\sigma^2\lambda}S_0}
\]
for $k=1$.

The differentiability of $f_{\Gam}(x)$ at $x=S_0$ is unclear immediately, in that $|\ln(x/S_0)|$ is not differentiable at $x=S_0$ and $K_{k-1/2}$ in Eq.~\eqref{eq:gamma_f} diverges at $x=S_0$ (the argument of $K_{k-1/2}$ becomes $0$).
We use symbols $f_{\Gam}'(S_0^{+})$ and $f_{\Gam}'(S_0^{-})$ for the right and left derivatives at $x=S_0$, respectively.
We do not consider the differentiability for the $k\le1/2$ case because $f_{\Gam}(x)$ diverges as $x\to S_0$ [see Proposition~\ref{prop:gamma}(c)].
\begin{proposition}[Differentiability of $f_{\Gam}(x)$ at $x=S_0$]
The differentiability of $f_{\Gam}(x)$ at $x=S_0$ varies depending on $k$ as follows:
\begin{enumerate}
\renewcommand{\labelenumi}{$\mathrm{(\roman{enumi})}$}
\item For $1/2<k<1$, both $f_{\Gam}'(S_0^{+})$ and $f_{\Gam}'(S_0^{-})$ diverge to infinity; $f_{\Gam}'(S_0^{\pm})=\mp\infty$.
\item For $k=1$, $f_{\Gam}'(S_0^{\pm})$ is finite but $f_{\Gam}'(S_0^{+})\ne f_{\Gam}'(S_0^{-})$.
\item For $k>1$, $f_{\Gam}'(S_0^{+})=f_{\Gam}'(S_0^{-})$; that is, $f_{\Gam}(x)$ is differentiable at $x=S_0$.
\end{enumerate}
\label{prop:gamma_diff}
\end{proposition}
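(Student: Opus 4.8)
The plan is to reduce the whole question to the local behaviour at $u=0$ of the single factor
\[
h(u)\coloneqq|u|^{k-1/2}K_{k-1/2}(b|u|),\qquad b\coloneqq\frac{\sqrt{\tilde\mu^2+2\sigma^2\lambda}}{\sigma^2},
\]
where $u=\ln(x/S_0)$. Writing $f_{\Gam}(x)=C\,(x/S_0)^{\tilde\mu/\sigma^2-1}h(u)$ with the positive constant $C=2\lambda^k/[\sqrt{2\pi}\,\Gamma(k)\sigma(\tilde\mu^2+2\sigma^2\lambda)^{(2k-1)/4}S_0]$, the prefactor $(x/S_0)^{\tilde\mu/\sigma^2-1}=e^{(\tilde\mu/\sigma^2-1)u}$ is analytic in $u$, and $u=\ln(x/S_0)$ is an analytic function of $x$ near $x=S_0$ with $du/dx=1/x>0$. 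By the chain rule, $f_{\Gam}'(S_0^\pm)=\frac{C}{S_0}\bigl[(\tilde\mu/\sigma^2-1)h(0)+h'(0^\pm)\bigr]$, so the one-sided derivatives of $f_{\Gam}$ at $S_0$ are finite, equal, or infinite precisely when those of $h$ at $0$ are, with the same sign (since $C/S_0>0$ and $x\to S_0^\pm$ corresponds to $u\to 0^\pm$).

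First I would expand $h$ near $u=0$ from the small-argument series of the Bessel function. For non-integer $\nu=k-1/2$, the identity $K_\nu=\frac{\pi}{2\sin\pi\nu}(I_{-\nu}-I_\nu)$ together with $I_\nu(z)=\sum_{m\ge0}\frac{(z/2)^{2m+\nu}}{m!\,\Gamma(m+\nu+1)}$ gives
\[
h(u)=\underbrace{\bigl(c_0+c_1u^2+\cdots\bigr)}_{\text{even, analytic}}+|u|^{2k-1}\underbrace{\bigl(d_0+d_1u^2+\cdots\bigr)}_{\text{analytic}},
\]
with $c_0=\tfrac12\Gamma(k-1/2)(2/b)^{k-1/2}$ and, crucially,
\[
d_0=-\frac{\pi}{2\sin\pi(k-1/2)}\,\frac{(b/2)^{k-1/2}}{\Gamma(k+1/2)}.
\]
The smooth (even) part is infinitely differentiable, and the higher singular terms $|u|^{2k-1}O(u^2)$ have derivatives $O(|u|^{2k})\to0$, so the entire obstruction to differentiability lives in the leading term $d_0|u|^{2k-1}$. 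For $1/2<k<1$ one has $0<k-1/2<1/2$, hence $\sin\pi(k-1/2)>0$ and therefore $d_0<0$; this sign is what ultimately fixes the orientation in part (i).

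Then I would read off the trichotomy from the exponent $2k-1$, using $\frac{d}{du}\bigl(d_0|u|^{2k-1}\bigr)=d_0(2k-1)|u|^{2k-2}\operatorname{sgn}u$. For $1/2<k<1$ (so $0<2k-1<1$ and $2k-2<0$) this diverges as $u\to0$; with $d_0<0$ it gives $h'(0^+)=-\infty$ and $h'(0^-)=+\infty$, whence $f_{\Gam}'(S_0^\pm)=\mp\infty$, which is (i). For $k=1$ the singular term is $d_0|u|$ with finite one-sided derivatives $\pm d_0\ne0$, so $f_{\Gam}'(S_0^\pm)$ are finite and unequal, which is (ii); equivalently one may use $K_{1/2}$ from Proposition~\ref{prop:K}(d) to get $h(u)=\sqrt{\pi/(2b)}\,e^{-b|u|}$ directly. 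For $k>1$ we have $2k-2>0$, so $\frac{d}{du}\bigl(d_0|u|^{2k-1}\bigr)\to0$ from both sides and $d_0|u|^{2k-1}$ is differentiable at $0$ with vanishing derivative; combined with the smooth part this yields $f_{\Gam}'(S_0^+)=f_{\Gam}'(S_0^-)$, which is (iii).

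The main obstacle I anticipate is the half-integer values $k=3/2,5/2,\dots$ (i.e.\ integer $\nu=k-1/2$), where $\sin\pi\nu=0$ and the expansion above breaks down: the small-$z$ series of $K_n(z)$ acquires a logarithmic term, so $h$ then contains $|u|^{2k-1}\ln|u|$ in place of a pure power. This only happens in the regime $k>1$, and the conclusion is unaffected there, since $|u|^{2k-1}\ln|u|$ with $2k-1>1$ has derivative $O(|u|^{2k-2}\ln|u|)\to0$ and is still differentiable at $0$; I would dispose of these $k$ by this separate remark rather than through the non-integer formula. The only other point needing care is the sign bookkeeping that $d_0<0$ in (i), as it is precisely what distinguishes $f_{\Gam}'(S_0^\pm)=\mp\infty$ from a spurious $\pm\infty$.
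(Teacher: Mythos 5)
Your proof is correct, and it takes a genuinely different route from the paper's. The paper differentiates $f_{\Gam}$ first, rewriting $K_{k-1/2}'$ in terms of $K_{k-1/2}$ and $K_{k-3/2}$ via Proposition~\ref{prop:K}(e), and then applies the leading small-argument asymptotics of Proposition~\ref{prop:K}(c) to each Bessel factor; since the index $k-3/2$ can be positive, zero, or negative, this forces an auxiliary case split at $k\gtrless 3/2$ before the genuine threshold at $k=1$ emerges from the exponent $2k-2$. You instead expand first: the connection formula $K_\nu=\frac{\pi}{2\sin\pi\nu}(I_{-\nu}-I_\nu)$ splits $h(u)=|u|^{k-1/2}K_{k-1/2}(b|u|)$ into an even analytic part plus $|u|^{2k-1}$ times an even analytic part, so the whole question collapses to the differentiability of $d_0|u|^{2k-1}$ at $u=0$, and the trichotomy (i)--(iii) is read off from the single exponent $2k-1$, with $d_0<0$ for $1/2<k<1$ giving the correct orientation $f_{\Gam}'(S_0^{\pm})=\mp\infty$. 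Your constants check out: $c_0$ matches Proposition~\ref{prop:K}(c), $d_0$ follows from the reflection formula, and at $k=1$ your $\pm d_0$ reproduces the paper's two unequal one-sided derivatives. What your approach buys is a unified case analysis (no $k=3/2$ split) and a transparent identification of where the singularity lives; what it costs is a stronger input --- the full small-$z$ series of $K_\nu$ via the $I_{\pm\nu}$ formula, which is not among the properties the paper lists in Proposition~\ref{prop:K} --- plus the separate treatment of integer order $\nu=k-1/2$ (i.e.\ $k=3/2,5/2,\dots$), where the series acquires a logarithm; you handle that correctly by noting these $k$ all lie in the regime $k>1$, where $|u|^{2k-1}\ln|u|$ is still differentiable at $0$ with vanishing derivative. (Both you and the paper tacitly identify one-sided derivatives with limits of $f_{\Gam}'$ as $x\to S_0^{\pm}$, which is justified by the mean value theorem since $f_{\Gam}$ is continuous at $S_0$ for $k>1/2$.)
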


\begin{proof}
Depending on $x\gtrless S_0$, $|\ln(x/S_0)|=\pm\ln(x/S_0)$.
In this proof, wherever a plus--minus  or minus--plus sign occurs, the upper and lower signs refer to $x>S_0$ and $x<S_0$, respectively.
The derivative of $f_{\Gam}(x)$ at $x\ne S_0$ becomes
\begin{align*}
f_{\Gam}'(x)&=\frac{2\lambda^k}{\sqrt{2\pi}\Gamma(k)\sigma(\tilde\mu^2+2\sigma^2\lambda)^{(2k-1)/4}}
\frac{1}{x}\left(\frac{x}{S_0}\right)^{\tilde\mu/\sigma^2-1}\left(\pm\ln\frac{x}{S_0}\right)^{k-1/2}\\
&\qquad\left[\left(\frac{\tilde\mu}{\sigma^2}-1\right)K_{k-1/2}\left(\pm\frac{\sqrt{\tilde\mu^2+2\sigma^2\lambda}}{\sigma^2}\ln\frac{x}{S_0}\right)
\mp\frac{\sqrt{\tilde\mu^2+2\sigma^2\lambda}}{\sigma^2}K_{k-3/2}\left(\pm\frac{\sqrt{\tilde\mu^2+2\sigma^2\lambda}}{\sigma^2}\ln\frac{x}{S_0}\right)
\right].
\end{align*}
By using Proposition~\ref{prop:K}(e) in Appendix~\ref{apdx}, the derivative $K_{k-1/2}'$ can be expressed using $K_{k-1/2}$ and $K_{k-3/2}$.
In the $x\to S_0\pm$ limit, the argument of Bessel functions approaches $0+$.
To apply Proposition~\ref{prop:K}(c) to calculate $f_\text{Gamma}'(S_0^{\pm})$, it should be noted that the asymptotic form of $K_{k-3/2}$ changes depending on the magnitude of $k$ and $3/2$.

When $k>3/2$, the cases $x>S_0$ and $x<S_0$ have the same form:
\begin{align*}
f_{\Gam}'(x)&\simeq\frac{2\lambda^k}{\sqrt{2\pi}\Gamma(k)\sigma(\tilde\mu^2+2\sigma^2\lambda)^{(2k-1)/4}}
\frac{1}{x}\left(\frac{x}{S_0}\right)^{\tilde\mu/\sigma^2-1}\\
&\qquad\left[\frac{1}{2}\left(\frac{\tilde\mu}{\sigma^2}-1\right)\Gamma\left(k-\frac{1}{2}\right)\left(\frac{\sqrt{\tilde\mu^2+2\sigma^2\lambda}}{2\sigma^2}\right)^{-k+1/2}-\Gamma\left(k-\frac{3}{2}\right)\left(\frac{\sqrt{\tilde\mu^2+2\sigma^2\lambda}}{2\sigma^2}\right)^{-k+5/2}\ln\frac{x}{S_0}\right]\\
& \quad(x\to S_0\pm).
\end{align*}
Hence, $f_{\Gam}(x)$ is differentiable at $x=S_0$ and the derivative becomes
\[
f_{\Gam}'(S_0)=\frac{2^{k-1}\Gamma(k-1/2)\sigma^{2k-2}\lambda^k}{\sqrt{\pi}S_0\Gamma(k)(\tilde\mu^2+2\sigma^2\lambda)^{k-1/2}}\left(\frac{\tilde\mu}{\sigma^2}-1\right).
\]

When $k=3/2$,
\begin{align*}
f_{\Gam}'(x)&\simeq\frac{2\lambda^{3/2}}{\sqrt{2\pi}\Gamma(3/2)\sigma(\tilde\mu^2+2\sigma^2\lambda)^{1/2}}
\frac{1}{x}\left(\frac{x}{S_0}\right)^{\tilde\mu/\sigma^2-1}\\
&\qquad\left[\left(\frac{\tilde\mu}{\sigma^2}-1\right)\frac{\sigma^2}{\sqrt{\tilde\mu^2+2\sigma^2\lambda}}+\frac{\sqrt{\tilde\mu^2+2\sigma^2\lambda}}{\sigma^2}\left(\ln\frac{x}{S_0}\right)\ln\left(\pm\frac{\sqrt{\tilde\mu^2+2\sigma^2\lambda}}{\sigma^2}\ln\frac{x}{S_0}\right)\right]
\quad(x\to S_0\pm).
\end{align*}
The second term vanishes in the $x\to S_0\pm$ limit.
Therefore, $f_{\Gam}(x)$ is differentiable at $x=S_0$, whose derivative becomes
\[
f_{\Gam}'(S_0)=\frac{4\lambda^{3/2}\sigma}{\sqrt{2}\pi(\tilde\mu^2+2\sigma^2\lambda)}\left(\frac{\tilde\mu}{\sigma^2}-1\right).
\]

When $k<3/2$,
\begin{align*}
f_{\Gam}'(x)&\simeq\frac{2\lambda^k}{\sqrt{2\pi}\Gamma(k)\sigma(\tilde\mu^2+2\sigma^2\lambda)^{(2k-1)/4}}
\frac{1}{x}\left(\frac{x}{S_0}\right)^{\tilde\mu/\sigma^2-1}\\
&\qquad\left[\frac{1}{2}\left(\frac{\tilde\mu}{\sigma^2}-1\right)\Gamma\left(k-\frac{1}{2}\right)\left(\frac{\sqrt{\tilde\mu^2+2\sigma^2\lambda}}{2\sigma^2}\right)^{-k+1/2}\right.\\
&\qquad\quad\left.\mp\Gamma\left(\frac{3}{2}-k\right)\left(\frac{\sqrt{\tilde\mu^2+2\sigma^2\lambda}}{2\sigma^2}\right)^{k-1/2}\left(\pm\ln\frac{x}{S_0}\right)^{2k-2} \right]\quad (x\to S_0\pm).
\end{align*}
The behavior of $f_{\Gam}'$ changes depending on the sign of $2k-2$.
For $k>1$ ($2k-2>0$), the second term vanishes as $x\to S_0\pm$, and thus, $f_{\Gam}(x)$ is differentiable at $x=S_0$.
For $k=1$, the $(\ln(x/S_0))^{2k-2}$ factor becomes unity, and
\[
f_{\Gam}'(S_0^{\pm})=\frac{\lambda}{S_0\sqrt{\tilde\mu^2+2\sigma^2\lambda}}\left(\frac{\tilde\mu}{\sigma^2}-1\mp\frac{\sqrt{\tilde\mu^2+2\sigma^2\lambda}}{\sigma^2}\right).
\]
The left and right derivatives at $x=S_0$ are both finite but different.
For $k<1$ ($2k-2<0$), the $(\pm\ln(x/S_0))^{2k-2}$ factor tends to infinity as $x\to S_0\pm$, and the derivative at $x=S_0$ diverges.
\end{proof}

\begin{remark}
For $1/2<k<1$, the value $f_{\Gam}(S_0)$ is finite but the derivative diverges; that is, $f_{\Gam}(x)$ has a cusp at $x=S_0$.
The case of $k=1$, corresponding to the double Pareto distribution, is placed at the boundary of undifferentiable $f_{\Gam}(x)$ at $x=S_0$.
Proposition~\ref{prop:gamma_diff} states that $f_{\Gam}(S_0)$ for $k>1$ is \textit{at least} of class $C^1$.
It is expected that the smoothness of $f_{\Gam}(x)$ increases for large $k$, but the boundary $k$ values between $C^1$ and $C^2$, $C^2$ and $C^3$, and so on have not yet been determined.
\end{remark}

\begin{figure}[t!]\centering
\begin{minipage}{60truemm}
\begin{flushleft}
\raisebox{36mm}{(a)}\hspace{-3mm}
\includegraphics[scale=0.8]{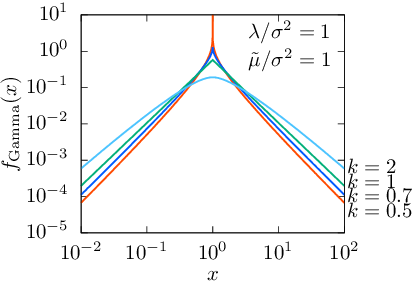}
\end{flushleft}
\end{minipage}
\begin{minipage}{65truemm}
\begin{flushleft}
\raisebox{36mm}{(b)}\hspace{-3mm}
\includegraphics[scale=0.8]{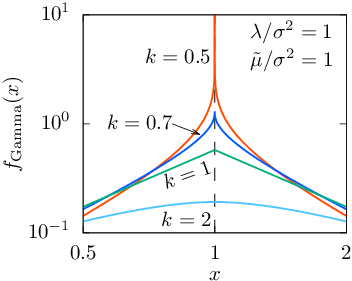}
\end{flushleft}
\end{minipage}
\vspace{0.5\baselineskip}
\begin{minipage}{60truemm}
\begin{flushleft}
\raisebox{36mm}{(c)}\hspace{-3mm}
\includegraphics[scale=0.8]{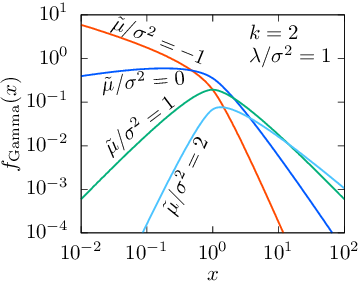}
\end{flushleft}
\end{minipage}
\begin{minipage}{65truemm}
\begin{flushleft}
\raisebox{36mm}{(d)}\hspace{-3mm}
\includegraphics[scale=0.8]{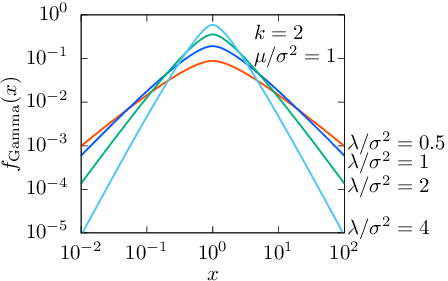}
\end{flushleft}
\end{minipage}
\caption{
Log-log graphs of $f_{\Gam}(x)$ for $S_0=1$ and different $k$, $\lambda/\sigma^2$, and $\tilde\mu/\sigma^2$.
(a) Dependence on $k$: $k=0.5$, $0.7$, $1$, and $2$ with $\lambda/\sigma^2=1$ and $\tilde\mu/\sigma^2=1$.
(b) Enlarged graph of (a) near $x=S_0=1$. The vertical dashed line indicates $x=S_0$.
(c) Dependence on $\tilde\mu/\sigma^2$: $\tilde\mu/\sigma^2=-1$, $0$, $1$, and $2$ with $k=2$ and $\lambda/\sigma^2=1$;
(d) Dependence on $\lambda/\sigma^2$: $\lambda/\sigma^2=0.5$, $1$, $2$, and $4$ with $k=2$ and $\tilde\mu/\sigma^2=1$.
}
\label{fig1}
\end{figure}

Figure~\ref{fig1} shows graphs of $f_{\Gam}(x)$ with $S_0=1$ on a log-log scale.
In Fig.~\ref{fig1}(a), the shape parameter $k$ of the gamma distribution is set to $k=0.5$, $0.7$, $1$, and $2$ with fixed $\lambda/\sigma^2=1$ and $\tilde\mu/\sigma^2=1$.
$f_{\Gam}(x)$ at large $x$ ($x\to\infty$) and small $x$ ($x\to0$) is large for large $k$.
Figure~\ref{fig1}(b) shows an enlarged graph of (a) near $x=S_0=1$.
The divergence at $S_0=1$ can be observed for $k=0.5$, as stated in Proposition~\ref{prop:gamma}(c).
Moreover, as stated in Proposition~\ref{prop:gamma_diff}, the graph for $k=0.7$ has a cusp, and the graph for $k=1$ is not differentiable at $x=S_0$.
The graph for $k=2$ appears smooth at $x=S_0$.
The graphs in Figs.~\ref{fig1}(a) and (b) are symmetric (on the logarithmic scale) across the axis $x=1(=S_0)$ because $f_{\Gam}(x)$ becomes a function of a single variable $|\ln(x/S_0)|$ when $\tilde\mu/\sigma^2=1$.
This symmetry breaks when $\tilde\mu/\sigma^2\ne1$, as shown in Fig.~\ref{fig1}(b) where $\tilde\mu/\sigma^2=-1$, $0$, $1$, and $2$ with fixed $k=2$ and $\lambda/\sigma^2=1$.
$f_{\Gam}(x)$ decreases rapidly as $x\to\infty$ for small $\tilde\mu/\sigma^2$, and conversely, it decreases rapidly as $x\to0+$ for large $\tilde\mu/\sigma^2$.
The graphs for $\tilde\mu/\sigma^2=0$, $1$, and $2$ appear to approach $0$ as $x\to0+$, whereas the graph for $\tilde\mu/\sigma^2=-1$ increases as $x\to0+$.
This observation is correct and is justified by Eq.~\eqref{eq:gamma_inequality}.
Lastly, the parameter$\lambda/\sigma^2$ changes the peak height at $x=S_0$; large $\lambda/\sigma^2$ yields a high peak and a rapid decrease, as shown in Fig.~\ref{fig1}(c).

\subsection{Observation time with inverse gamma distribution}
Another special family of the GIG distribution is the inverse gamma distribution.
By substituting $\nu=-k$, $\psi=0$, and $\chi=2\theta$ into Eq.~\eqref{eq:gamma_PDF}, we obtain
\begin{equation}
g_{\GIG}(t; -k,0,2\theta)=\frac{\theta^k}{\Gamma(k)}\left(\frac{1}{t}\right)^{k+1}\exp\left(-\frac{\theta}{t}\right).
\label{eq:igamma_PDF}
\end{equation}
This is the PDF of the inverse gamma distribution with the shape parameter $k$ and the scale parameter $\theta$, and we use $\InvGamma(k,\theta)$ for this distribution.
In deriving this PDF from Eq.~\eqref{eq:gamma_PDF}, the following integral is used:
\begin{equation}
\int_0^\infty u^{-k-1}\exp\left(-\frac{\theta}{2u}\right)du = \left(\frac{\theta}{2}\right)^{-k}\int_0^\infty v^{k-1}e^{-v}dv = \left(\frac{\theta}{2}\right)^{-k}\Gamma(k),
\label{eq:igamma_int}
\end{equation}
where we introduce $v=\theta/(2u)$.
The inverse gamma distribution is used in Bayesian modeling~\cite{Hoff} and survival distributions~\cite{Glen}.

When $T\sim\InvGamma(k,\theta)$, $1/T$ follows the gamma distribution $\Gam(k, 1/\theta)$.
The inverse gamma distribution includes the unshifted L\'evy distribution (L\'evy distribution with location parameter $0$) with $k=1/2$ and the inverse-chi-squared distribution with $\theta=1/2$.

\begin{corollary}
The PDF of $S_T$ with $T\sim\InvGamma(k,\theta)$ is
\begin{align}
f_{\InvGamma}(x)&=\frac{\sqrt{2}\theta^k}{\sqrt{\pi}\sigma\Gamma(k)S_0}\left(\frac{x}{S_0}\right)^{\tilde\mu/\sigma^2-1}\left(\frac{(\ln(x/S_0))^2}{\sigma^2}+2\theta\right)^{-(2k+1)/4}\left(\frac{\tilde\mu^2}{\sigma^2}\right)^{(2k+1)/4}\nonumber\\
&\quad K_{k+1/2}\left(\sqrt{\left(\frac{(\ln(x/S_0))^2}{\sigma^2}+2\theta\right)\frac{\tilde\mu^2}{\sigma^2}}\right)
\label{eq:igamma_f}
\end{align}
for $\tilde\mu\ne0$ and
\[
f_{\InvGamma}(x)=\frac{2^k\theta^k\Gamma(k+1/2)}{\sqrt{\pi}\sigma\Gamma(k)}\frac{1}{x}\left(\left(\frac{\ln(x/S_0)}{\sigma}\right)^2+2\theta\right)^{-(k+1/2)}
\]
for $\tilde\mu=0$.
\end{corollary}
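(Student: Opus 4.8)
The plan is to obtain $f_{\InvGamma}(x)$ as a specialization of Theorem~\ref{thm2}, corresponding to the inverse gamma parameters $\nu=-k$, $\psi=0$, and $\chi=2\theta$. The most transparent route is to insert the already-normalized inverse gamma density~\eqref{eq:igamma_PDF} directly into Eq.~\eqref{eq:general_f}, which sidesteps any delicate limit in the GIG normalization constant. First I would expand the lognormal exponent $(\ln(x/S_0)-\tilde\mu t)^2/(2\sigma^2 t)$, split off the $t$-independent factor $(x/S_0)^{\tilde\mu/\sigma^2}$, and combine the surviving $\exp(-(\ln(x/S_0))^2/(2\sigma^2 t))$ with the inverse gamma factor $\exp(-\theta/t)$. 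After collecting the powers of $t$ (namely $t^{-k-1}$ from the inverse gamma density and $t^{-1/2}$ from the lognormal density), the computation reduces to the single integral $\int_0^\infty t^{-k-3/2}\exp(-pt-a/(4t))\,dt$ with $p=\tilde\mu^2/(2\sigma^2)$ and $a=2(\,(\ln(x/S_0))^2/\sigma^2+2\theta\,)$.

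For $\tilde\mu\ne0$ we have $p>0$, so Lemma~\ref{lemma1} applies with order $\nu=-k-1/2$. The identity $K_{-\nu}=K_\nu$ from Proposition~\ref{prop:K}(a) then turns $K_{-k-1/2}$ into $K_{k+1/2}$, and reading off $\sqrt{ap}$ and $(a/(4p))^{\nu/2}$ reproduces the Bessel argument and the two power factors appearing in Eq.~\eqref{eq:igamma_f}. Simplifying the numerical prefactor via $2/\sqrt{2\pi}=\sqrt{2}/\sqrt{\pi}$ and rewriting $\frac1x(x/S_0)^{\tilde\mu/\sigma^2}=\frac{1}{S_0}(x/S_0)^{\tilde\mu/\sigma^2-1}$ yields the stated formula. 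I expect this case to be entirely routine once the integral has been cast into the form of Lemma~\ref{lemma1}.

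The genuinely separate case is $\tilde\mu=0$, for which $p=0$ and Lemma~\ref{lemma1} no longer applies, since it requires $p>0$. This is the main obstacle, and it is exactly why the corollary splits into two formulas. Here the $-pt$ term disappears and the factor $(x/S_0)^{\tilde\mu/\sigma^2}$ becomes unity, leaving $\int_0^\infty t^{-k-3/2}\exp(-c/(2t))\,dt$ with $c=(\ln(x/S_0))^2/\sigma^2+2\theta>0$. I would evaluate this elementary integral by the substitution $v=c/(2t)$, the same device used for Eq.~\eqref{eq:igamma_int}, which converts it into $(c/2)^{-(k+1/2)}\int_0^\infty v^{k-1/2}e^{-v}\,dv=(c/2)^{-(k+1/2)}\Gamma(k+1/2)$. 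Reinstating the prefactor and using $2^{k+1/2}/\sqrt{2\pi}=2^{k}/\sqrt{\pi}$ then gives the $\tilde\mu=0$ expression. As a consistency check one may verify that the $\tilde\mu\to0$ behavior of the $\tilde\mu\ne0$ formula agrees with this, the vanishing factor $(\tilde\mu^2/\sigma^2)^{(2k+1)/4}$ being compensated by the divergence of $K_{k+1/2}$ as its argument tends to $0$; but the clean derivation is the direct one above.
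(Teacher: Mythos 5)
Your proof is correct, but for $\tilde\mu\ne0$ it follows a genuinely different route from the paper's. The paper derives Eq.~\eqref{eq:igamma_f} as a limit of Theorem~\ref{thm2}: it sets $\nu=-k$, $\chi=2\theta$, lets $\psi\to0+$, and resolves the indeterminate GIG normalization factor $\psi^{-k/2}/K_{-k}(\sqrt{\psi\chi})$ via the small-argument asymptotics of $K_k$ (Proposition~\ref{prop:K}(a) and (c)), in parallel with the gamma case, Eq.~\eqref{eq:gamma_proof}. You instead bypass Theorem~\ref{thm2} and rerun its proof with the already-normalized density \eqref{eq:igamma_PDF} inserted into Eq.~\eqref{eq:general_f}, invoking Lemma~\ref{lemma1} with $\nu=-k-1/2$, $p=\tilde\mu^2/(2\sigma^2)$, $a=2\bigl((\ln(x/S_0))^2/\sigma^2+2\theta\bigr)$, and then $K_{-\nu}=K_{\nu}$; your identifications of $\sqrt{ap}$, $(a/(4p))^{\nu/2}$, and the prefactors all check out, so the computation is sound. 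What each approach buys: the paper's limit argument keeps the corollary logically subordinate to Theorem~\ref{thm2} and reuses exactly the same Bessel asymptotics in the gamma and inverse gamma corollaries, but it requires care with the degenerate normalization constant; your direct computation avoids any limiting procedure and makes the case split transparent---$\tilde\mu=0$ is precisely the boundary $p=0$ where Lemma~\ref{lemma1} fails---at the cost of repeating the integral manipulation that Theorem~\ref{thm2} was meant to encapsulate. For $\tilde\mu=0$, your elementary substitution $v=c/(2t)$ coincides with the second of the two derivations the paper itself offers (directly from Eq.~\eqref{eq:general_f}, computed as in Eq.~\eqref{eq:igamma_int}); the paper also notes the alternative of letting $\tilde\mu\to0$ in Eq.~\eqref{eq:igamma_f} with Proposition~\ref{prop:K}(c), which is essentially the consistency check you mention at the end.
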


\begin{proof}
In taking the $\psi\to0+$ limit of Theorem~\ref{thm2}, we use Proposition~\ref{prop:K}(a) and (c) in Appendix~\ref{apdx} to obtain
\[
\frac{\psi^{-k/2}}{K_{-k}(\sqrt{\psi\chi})}=\frac{\psi^{-k/2}}{K_{k}(\sqrt{\psi\chi})}\simeq\frac{\chi^{k/2}}{2^{k-1}\Gamma(k)},
\]
which is similar to Eq.~\eqref{eq:gamma_proof}.

The PDF $f_{\InvGamma}(x)$ for $\tilde\mu=0$ can be obtained by taking the $\tilde\mu\to0$ limit of Eq.~\eqref{eq:igamma_f} and applying Proposition~\ref{prop:K}(c), or it can be derived directly from Eq.~\eqref{eq:general_f} as
\begin{align*}
f_{\InvGamma}(x)&=\int_0^\infty f_{\LN}(x;\ln S_0, \sigma^2t)\frac{\theta^k}{\Gamma(k)}t^{-k-1}\exp\left(-\frac{\theta}{t}\right)dt\\
&=\frac{\theta^k}{\sqrt{2\pi}\sigma x\Gamma(k)}\int_0^\infty t^{-k-3/2}\exp\left[-\left(\frac{(\ln(x/S_0))^2}{2\sigma^2}+\theta\right)\frac{1}{t}\right]dt,
\end{align*}
and this integral is calculated as in Eq.~\eqref{eq:igamma_int}.
\end{proof}

As with $f_{\Gam}(x)$ (see Remark~\ref{remark:gamma}), the essential independent parameters of $f_{\InvGamma}(x)$ are $k$, $\tilde\mu/\sigma^2$, and $\theta\sigma^2$ (if $x/S_0$ is considered).
Unlike $f_{\Gam}(x)$, $f_{\InvGamma}(x)$ has no singularity at $x=S_0$ for any parameter values, and $f_{\InvGamma}(x)$ is always a smooth ($C^\infty$) function.

The moment-generating function of $T\sim\InvGamma(k,\theta)$ is
\[
M_T(s)=\frac{\theta^k}{\Gamma(k)}\int_0^\infty t^{-k-1}\exp\left(st-\frac{\theta}{t}\right)dt,
\]
which is defined for $s\le0$.
By calculating the integral, $M_T(0)=1$ and
\[
M_T(s)=\frac{2(-\theta s)^{k/2}}{\Gamma(k)} K_k(2\sqrt{-\theta s})
\]
for $s<0$~\cite{Paolella}.
Based on Theorem~\ref{thm1}, the $m$th moment of $S_T$ is finite when $m\tilde\mu+m^2\sigma^2/2\le0$.
Therefore, $\tilde\mu<0$ is a necessary condition for finite $E[S_T^m]$, and finite moment $E[S_T^m]$ is realized only at most $m\le 2|\tilde\mu|/\sigma^2$.
Compared with the gamma distributed $T\sim\Gam(k,\lambda)$ case, which can result in a finite moment for any order $m$, the condition for finite $E[S_T^m]$ under $T\sim\InvGamma(k,\theta)$ is severe.
This difference probably arises because the PDF of the inverse gamma distribution, Eq.~\eqref{eq:igamma_PDF}, has power-law-like decay for large $t$, which is significantly slower than the decay of the gamma distribution.
Another difference between the gamma and inverse gamma distributions is that $M_T(s)$ of $T\sim\InvGamma(k,\theta)$ involves the modified Bessel function, whereas that of $T\sim\Gam(k,\lambda)$ is expressed as an elementary function.

For example, the mean $E[S_T]$ becomes
\begin{equation}
E[S_T]=\frac{S_0 2^{1-k/2}}{\Gamma(k)}\theta^{k/2}(2|\tilde\mu|-\sigma^2)^{k/2} K_k\left(\sqrt{2(2|\tilde\mu|-\sigma^2)\theta}\right)
\label{eq:invgamma_mean}
\end{equation}
for $\sigma^2\le2|\tilde\mu|$.
This is a function of two parameters $k$ and $(2|\tilde\mu|-\sigma^2)\theta$, except for the proportionality constant $S_0$.
For $k=1/2$, where $T$ follows an unshifted L\'evy distribution, Proposition~\ref{prop:K}(d) in Appendix~\ref{apdx} simplifies $E[S_T]$ to
\[
E[S_T]=S_0\exp\left(-\sqrt{2(2|\tilde\mu|-\sigma^2)\theta}\right).
\]
Figure~\ref{fig2} shows a semi-log graph of $E[S_T]$ as a function of $(2|\tilde\mu|-\sigma^2)\theta$ with $k=0.5$, $1$, and $2$.
$E[S_T]$ decreases as $(2|\tilde\mu|-\sigma^2)\theta$ increases, and it becomes large for large $k$.
Using Proposition~\ref{prop:K}(c), $E[S_T]$ approaches $S_0$ as $(2|\tilde\mu|-\sigma^2)\theta\to0+$.

\begin{figure}\centering
\includegraphics[scale=0.8]{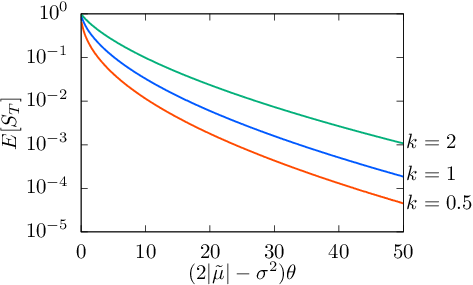}
\caption{
Semi-log graphs of the mean $E[S_T]$ given in Eq.~\eqref{eq:invgamma_mean} as a function of $(2|\tilde\mu|-\sigma^2)\theta$, with $k=0.5$, $1$, and $2$ and $S_0=1$.
}
\label{fig2}
\end{figure}

\begin{proposition}[Asymptotic form of $f_{\InvGamma}(x)$]
\begin{enumerate}
\renewcommand{\labelenumi}{$\mathrm{(\alph{enumi})}$}
\item For $\tilde\mu>0$,
\[
f_{\InvGamma}(x)\simeq\frac{(|\tilde\mu|\theta)^k}{\Gamma(k)}\frac{1}{x}\left|\ln\frac{x}{S_0}\right|^{-k-1}\quad(x\to\infty)
\]
and
\[
f_{\InvGamma}(x)\simeq\frac{(|\tilde\mu|\theta)^k}{\Gamma(k)S_0}\left(\frac{x}{S_0}\right)^{2\tilde\mu/\sigma^2-1}\left|\ln\frac{x}{S_0}\right|^{-k-1}\quad(x\to0+).
\]
\item For $\tilde\mu<0$,
\[
f_{\InvGamma}(x)\simeq\frac{(|\tilde\mu|\theta)^k}{\Gamma(k)S_0}\left(\frac{x}{S_0}\right)^{-2|\tilde\mu|/\sigma^2-1}\left|\ln\frac{x}{S_0}\right|^{-k-1}\quad(x\to\infty)
\]
and
\[
f_{\InvGamma}(x)\simeq\frac{(|\tilde\mu|\theta)^k}{\Gamma(k)}\frac{1}{x}\left|\ln\frac{x}{S_0}\right|^{-k-1}\quad(x\to0+).
\]
\item For $\tilde\mu=0$,
\[
f_{\InvGamma}(x)\simeq \frac{(2\theta\sigma^2)^k\Gamma(k+1/2)}{\sqrt{\pi}\Gamma(k)}\frac{1}{x}\left|\ln\frac{x}{S_0}\right|^{-2k-1}\quad(x\to\infty, x\to0+).
\]
$f_{\InvGamma}(x)$ for $\tilde\mu=0$ has the same asymptotic form in the $x\to\infty$ and $x\to0+$ limits.
\end{enumerate}
\end{proposition}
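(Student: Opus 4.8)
The plan is to reduce every case to the behavior in the single regime $|\ln(x/S_0)|\to\infty$, since both $x\to\infty$ and $x\to0+$ force $|\ln(x/S_0)|\to\infty$. Writing $L=\ln(x/S_0)$ for brevity, one has $L\to+\infty$ as $x\to\infty$ and $L\to-\infty$ as $x\to0+$, so in each limit $|L|\to\infty$ but the sign of $L$ differs; this sign is exactly what will distinguish the two asymptotic forms within each part. For $\tilde\mu\ne0$ the work is carried by the large-argument asymptotic of $K_{k+1/2}$, while for $\tilde\mu=0$ it reduces to an elementary algebraic expansion.

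For parts (a) and (b), where $\tilde\mu\ne0$, I would start from the closed form of $f_{\InvGamma}(x)$ in Eq.~\eqref{eq:igamma_f} and observe that the argument of the Bessel function,
\[
z=\frac{|\tilde\mu|}{\sigma^2}\sqrt{L^2+2\theta\sigma^2},
\]
diverges as $|L|\to\infty$, so the large-argument asymptotic $K_\nu(z)\simeq\sqrt{\pi/(2z)}\,e^{-z}$ from Proposition~\ref{prop:K}(b) applies. I would first expand $z=\frac{|\tilde\mu|}{\sigma^2}|L|+O(|L|^{-1})$ and check that the $O(|L|^{-1})$ correction contributes only a factor tending to $1$ in $e^{-z}$, so that $e^{-z}\simeq e^{-(|\tilde\mu|/\sigma^2)|L|}$ to leading order. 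The decisive step is then to convert this exponential into a power of $x/S_0$: since $|L|=L$ for $x>S_0$ and $|L|=-L$ for $x<S_0$, one obtains $e^{-(|\tilde\mu|/\sigma^2)|L|}=(x/S_0)^{-|\tilde\mu|/\sigma^2}$ in the $x\to\infty$ limit and $(x/S_0)^{+|\tilde\mu|/\sigma^2}$ in the $x\to0+$ limit. Multiplying by the algebraic prefactor $(x/S_0)^{\tilde\mu/\sigma^2-1}$ and collecting the $|L|^{-1/2}$ from $\sqrt{1/z}$ together with the $|L|^{-(2k+1)/2}$ coming from $(L^2/\sigma^2+2\theta)^{-(2k+1)/4}$ produces the common $|L|^{-k-1}$ factor in every subcase. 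The four distinct power-law exponents then emerge purely from combining the fixed exponent $\tilde\mu/\sigma^2-1$ with $\mp|\tilde\mu|/\sigma^2$ according to the sign of $\tilde\mu$ and the limit taken; finally I would verify that the $\sigma$-, $\sqrt\pi$-, and $|\tilde\mu|$-powers in the prefactor (including $(\tilde\mu^2/\sigma^2)^{(2k+1)/4}$ against the $|\tilde\mu|^{-1/2}$ from $\sqrt{1/z}$) collapse to the stated constant $(|\tilde\mu|\theta)^k/\Gamma(k)$, with the surviving $(x/S_0)^{-1}$ in the two subcases absorbing the $1/S_0$ into $1/x$.

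Part (c), $\tilde\mu=0$, is the simplest and contains no Bessel function: starting from the explicit form of $f_{\InvGamma}(x)$ for $\tilde\mu=0$, I would merely expand $(L^2/\sigma^2+2\theta)^{-(k+1/2)}\simeq(|L|/\sigma)^{-(2k+1)}$ as $|L|\to\infty$, which immediately yields the factor $|L|^{-2k-1}$ and the constant $(2\theta\sigma^2)^k\Gamma(k+1/2)/(\sqrt\pi\,\Gamma(k))$; because no exponential and hence no sign of $L$ enters, the $x\to\infty$ and $x\to0+$ limits coincide. The only genuine obstacle lies in parts (a) and (b): one must track the sign of $\ln(x/S_0)$ carefully when turning $e^{-z}$ into a power of $x/S_0$, and must confirm that the subleading term in the expansion of $z$ does not leak into the leading exponent. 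Everything else is routine collection of powers.
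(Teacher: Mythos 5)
Your proposal is correct and follows essentially the same route the paper takes: the paper leaves this proposition unproved because it is the exact analogue of Proposition~\ref{prop:gamma}(a),(b), whose proof likewise applies Proposition~\ref{prop:K}(b) in the common regime $|\ln(x/S_0)|\to\infty$ and converts $e^{-(|\tilde\mu|/\sigma^2)|\ln(x/S_0)|}$ into the sign-dependent power of $x/S_0$. Your additional check that the $O(|L|^{-1})$ correction in the Bessel argument only contributes a factor tending to $1$, and the elementary expansion for $\tilde\mu=0$, complete the argument exactly as intended.
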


From this proposition, $f_{\InvGamma}(0+)$ is determined by the $\tilde\mu/\sigma^2$ value as follows:
\[
\lim_{x\to0+}f_{\InvGamma}(x)=
\begin{dcases}
0 & \dfrac{\tilde\mu}{\sigma^2}>\frac{1}{2},\\
\infty & \dfrac{\tilde\mu}{\sigma^2}\le\frac{1}{2}.
\end{dcases}
\]

\begin{figure}[t!]\centering
\raisebox{36mm}{(a)}\hspace{-3mm}
\includegraphics[scale=0.8]{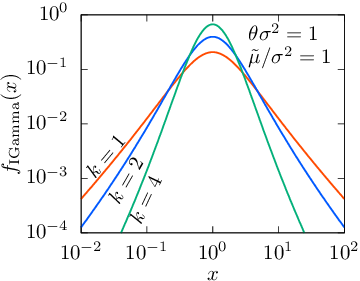}
\raisebox{36mm}{(b)}\hspace{-3mm}
\includegraphics[scale=0.8]{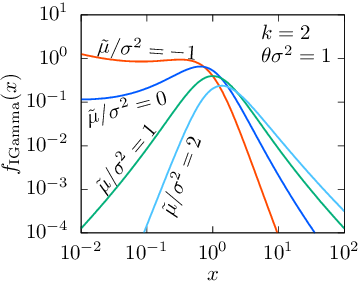}\\
\raisebox{36mm}{(c)}\hspace{-3mm}
\includegraphics[scale=0.8]{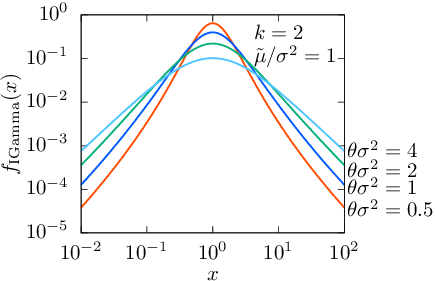}
\caption{
Log-log graphs of $f_{\InvGamma}(x)$ for $S_0=1$.
(a) Dependence on $k$: $k=1$, $2$, and $4$ with $\theta\sigma^2=1$ and $\tilde\mu/\sigma^2=1$;
(b) dependence on $\tilde\mu/\sigma^2$: $\tilde\mu/\sigma^2=-1$, $0$, $1$, and $2$ with $k=2$ and $\theta\sigma^2=1$;
(c) dependence on $\theta\sigma^2$: $\theta\sigma^2=0.5$, $1$, $2$, and $4$ with $k=2$ and $\tilde\mu/\sigma^2=1$.
}
\label{fig3}
\end{figure}

Log-log graphs of $f_{\InvGamma}(x)$ are shown in Fig.~\ref{fig3}.
As shown in Fig.~\ref{fig3}(a), $f_{\InvGamma}(x)$ for larger $k$ has a higher peak at $x=S_0=1$ and decreases rapidly as $x\to\infty$ and $x\to0+$.
This dependence is opposite to the tendency of the parameter $k$ in $f_{\Gam}(x)$, as shown in Fig.~\ref{fig1}(a).
The graphs in Fig.~\ref{fig3}(a) are symmetric across the $x=S_0$ axis because of $\tilde\mu/\sigma^2=1$ for which $f_{\InvGamma}(x)$ become a function of $(\ln(x/S_0))^2$, as with $f_{\Gam}(x)$.
As shown in Fig.~\ref{fig3}(c), increasing $\tilde\mu/\sigma^2$ makes the $x>S_0$ side of $f_{\InvGamma}(x)$ thicker and the $x<S_0$ side thinner, and vice versa.
Increasing $\theta\sigma^2$ results in a low peak at $x=S_0$ and a slow decrease, as shown in Fig.~\ref{fig3}(c).
This dependence is opposite to the tendency of the parameter $\lambda/\sigma^2$ in $f_{\Gam}(x)$ shown in Fig.~\ref{fig1}(d).

\subsection{Observation time with inverse Gaussian distribution}
By substituting $\nu=-1/2$, $\psi=\omega/\tau^2$, and $\chi=\omega$ ($\tau, \omega>0$) into the GIG distribution in Eq.~\eqref{eq:GIG_PDF}, we obtain the PDF of the inverse Gaussian distribution:
\[
g_{\GIG}\left(t; -\frac{1}{2}, \frac{\omega}{\tau^2}, \omega\right)=\sqrt{\frac{\omega}{2\pi t^3}}\exp\left(-\frac{\omega(t-\tau)^2}{2\tau^2 t}\right),
\]
where
\[
K_{-1/2}\left(\frac{\omega}{\tau}\right)=\sqrt{\frac{\pi\tau}{2\omega}}e^{-\omega/\tau}
\]
from Proposition~\ref{prop:K}(a) and (d) in Appendix~\ref{apdx}.
In this study, we use $\IG(\tau,\omega)$ for the inverse Gaussian distribution.
The parameter $\tau$ is the mean of this distribution, and $\omega$ is called the shape parameter.
The standard notation uses $\mu$ for the mean and $\lambda$ for the shape parameter~\cite{Chhikara}; however, we use $\tau$ and $\omega$ here because $\mu$ and $\lambda$ already denote other parameters.
The inverse Gaussian distribution appears in the hitting-time distribution of Brownian motion~\cite{Redner}.
The distribution $\IG(\tau, \omega)$ converges to an unshifted L\'evy distribution in the $\tau\to\infty$ limit.

As the inverse Gaussian distribution is a subclass of the GIG distribution, the PDF of $S_T$ under $T\sim\IG(\tau, \omega)$ can be derived as a special case of Theorem~\ref{thm2}.

\begin{corollary}
The PDF of $S_T$ for $T\sim\IG(\tau, \omega)$ becomes
\begin{align*}
f_{\IG}(x)&=\frac{\sqrt{\omega}e^{\omega/\tau}}{\pi\sigma S_0}\left(\frac{x}{S_0}\right)^{\tilde\mu/\sigma^2-1}\left(\frac{(\ln(x/S_0))^2}{\sigma^2}+\omega\right)^{-1/2}\left(\frac{\tilde\mu^2}{\sigma^2}+\frac{\omega}{\tau^2}\right)^{1/2}\\
&\quad K_{1}\left(\sqrt{\left(\frac{(\ln(x/S_0))^2}{\sigma^2}+\omega\right)\left(\frac{\tilde\mu^2}{\sigma^2}+\frac{\omega}{\tau^2}\right)}\right).
\end{align*}
\end{corollary}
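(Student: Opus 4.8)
The plan is to obtain $f_{\IG}(x)$ as a direct specialization of Theorem~\ref{thm2}, since the inverse Gaussian distribution $\IG(\tau,\omega)$ is simply $\GIG(-1/2,\,\omega/\tau^2,\,\omega)$. First I would substitute $\nu=-1/2$, $\psi=\omega/\tau^2$, and $\chi=\omega$ into the closed form of $f_{\GIG}(x)$ and track each factor separately. The Bessel index becomes $\nu-1/2=-1$, so the Bessel function appearing in the body of the formula is $K_{-1}$; I would rewrite it as $K_1$ using the symmetry $K_\nu=K_{-\nu}$ from Proposition~\ref{prop:K}(a). The power $(2\nu-1)/4$ evaluates to $-1/2$, which already matches the exponents $-1/2$ and $+1/2$ carried by the two bracketed factors in the claimed expression, so the body of the formula needs no work beyond this substitution.

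The only genuine simplification occurs in the normalizing prefactor. Substituting the parameters gives $(\psi/\chi)^{\nu/2}=(1/\tau^2)^{-1/4}=\tau^{1/2}$ and $K_\nu(\sqrt{\psi\chi})=K_{-1/2}(\omega/\tau)$. I would then insert the explicit half-integer value $K_{-1/2}(\omega/\tau)=\sqrt{\pi\tau/(2\omega)}\,e^{-\omega/\tau}$, which follows from Proposition~\ref{prop:K}(a) and (d), to eliminate the Bessel function from the denominator. Collecting constants, the $\sqrt{2\pi}\,\sigma$ in the denominator combines with $\sqrt{\pi\tau/(2\omega)}$ to give $\pi\sqrt{\tau/\omega}\,\sigma$, the numerator factor $\tau^{1/2}$ cancels the $\sqrt{\tau}$, and the $e^{-\omega/\tau}$ in the denominator passes to the numerator as $e^{\omega/\tau}$, producing the stated prefactor $\sqrt{\omega}\,e^{\omega/\tau}/(\pi\sigma S_0)$.

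Since every step is a substitution followed by elementary algebra, there is no conceptual obstacle; the derivation is entirely mechanical once Theorem~\ref{thm2} is available. The only points requiring care are the conversion of the negative order $K_{-1}$ to $K_1$ and the bookkeeping of the powers of $\tau$, $\omega$, and $\pi$ when the half-integer value of $K_{-1/2}$ is folded into the prefactor; a dropped square root or a mishandled $e^{-\omega/\tau}$ factor there is the most likely source of error.
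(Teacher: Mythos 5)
Your proposal is correct and follows exactly the route the paper takes: the paper derives this corollary as a direct specialization of Theorem~\ref{thm2} with $\nu=-1/2$, $\psi=\omega/\tau^2$, $\chi=\omega$, using $K_{-1/2}(\omega/\tau)=\sqrt{\pi\tau/(2\omega)}\,e^{-\omega/\tau}$ (Proposition~\ref{prop:K}(a) and (d)) to eliminate the Bessel function from the normalizing constant. Your prefactor bookkeeping ($\sqrt{2\pi}\cdot\sqrt{\pi\tau/(2\omega)}=\pi\sqrt{\tau/\omega}$, cancellation of $\sqrt{\tau}$, and inversion of $e^{-\omega/\tau}$) checks out and reproduces $\sqrt{\omega}\,e^{\omega/\tau}/(\pi\sigma S_0)$ exactly.
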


Similar to $f_{\Gam}(x)$ and $f_{\InvGamma}(x)$, $f_{\IG}(x)$ has three reduced parameters, namely, $\tau\sigma^2$, $\omega\sigma^2$, and $\tilde\mu/\sigma^2$.
The function $f_{\IG}(x)$ is continuous and differentiable ($C^\infty$) over entire $x>0$ including $x=S_0$.

The moment-generating function of $T\sim\IG(\tau, \omega)$ becomes
\[
M_T(s)=\exp\left[\frac{\omega}{\tau}\left(1-\sqrt{1-\frac{2\tau^2 s}{\omega}}\right)\right],
\]
which is defined for $s<\omega/(2\tau^2)$.
Owing to Theorem~\ref{thm1}, the mean $E[S_T]$ becomes finite if and only if
\[
\tilde\mu+\frac{\sigma^2}{2}=\mu<\frac{\omega}{2\tau^2},
\]
and the mean becomes
\[
E[S_T]=\exp\left[\frac{\omega}{\tau}\left(1-\sqrt{1-\frac{2\tau^2\mu}{\omega}}\right)\right].
\]

\begin{proposition}[Asymptotic form of $f_{\IG}(x)$]
\[
f_{\IG}(x)\simeq\frac{\sqrt{\omega}e^{\omega/\tau}}{\sqrt{2\pi} S_0}\left(\tilde\mu^2+\frac{\omega\sigma^2}{\tau^2}\right)^{1/4}\left(\frac{x}{S_0}\right)^{(\tilde\mu-\sqrt{\tilde\mu^2+\omega\sigma^2/\tau^2})/\sigma^2-1}\left(\ln\frac{x}{S_0}\right)^{-3/2}\quad (x\to\infty)
\]
and
\[
f_{\IG}(x)\simeq\frac{\sqrt{\omega}e^{\omega/\tau}}{\sqrt{2\pi} S_0}\left(\tilde\mu^2+\frac{\omega\sigma^2}{\tau^2}\right)^{1/4}\left(\frac{x}{S_0}\right)^{(\tilde\mu+\sqrt{\tilde\mu^2+\omega\sigma^2/\tau^2})/\sigma^2-1}\left|\ln\frac{x}{S_0}\right|^{-3/2}\quad (x\to0+).
\]
\end{proposition}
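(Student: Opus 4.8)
The plan is to follow the same strategy as in the proof of Proposition~\ref{prop:gamma}(a) and (b): both the $x\to\infty$ and $x\to0+$ limits force $|\ln(x/S_0)|\to\infty$, so the factor $(\ln(x/S_0))^2/\sigma^2+\omega$ inside the square root diverges and the argument of $K_1$ in the expression for $f_{\IG}(x)$ grows without bound. I would therefore invoke the large-argument asymptotic form $K_\nu(z)\simeq\sqrt{\pi/(2z)}\,e^{-z}$ recorded in Proposition~\ref{prop:K}(b) in Appendix~\ref{apdx}, specialized to $\nu=1$.

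First I would write the argument of $K_1$ as $z=\sqrt{AB}$ with $A=(\ln(x/S_0))^2/\sigma^2+\omega$ and $B=\tilde\mu^2/\sigma^2+\omega/\tau^2$, and note that for $|\ln(x/S_0)|\to\infty$ one has $A\simeq(\ln(x/S_0))^2/\sigma^2$, whence $z\simeq(|\ln(x/S_0)|/\sigma)\sqrt{B}=(|\ln(x/S_0)|/\sigma^2)\sqrt{\tilde\mu^2+\omega\sigma^2/\tau^2}$ after using $B=(\tilde\mu^2+\omega\sigma^2/\tau^2)/\sigma^2$. The dominant behavior comes from $e^{-z}$, which, exactly as in Proposition~\ref{prop:gamma}, I would rewrite as a power of $x/S_0$: one has $e^{-z}=(x/S_0)^{\mp\sqrt{\tilde\mu^2+\omega\sigma^2/\tau^2}/\sigma^2}$, where the upper (minus) sign applies to $x>S_0$ (the $x\to\infty$ limit) and the lower (plus) sign to $x<S_0$ (the $x\to0+$ limit). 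Combining this exponential power with the prefactor $(x/S_0)^{\tilde\mu/\sigma^2-1}$ already present produces the two stated exponents $(\tilde\mu\mp\sqrt{\tilde\mu^2+\omega\sigma^2/\tau^2})/\sigma^2-1$.

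The remaining work is purely bookkeeping of the algebraic prefactor. The $\sqrt{1/z}$ factor from the Bessel asymptotic contributes $|\ln(x/S_0)|^{-1/2}$, while the explicit factor $A^{-1/2}\simeq\sigma/|\ln(x/S_0)|$ contributes a further $|\ln(x/S_0)|^{-1}$, together yielding the announced $|\ln(x/S_0)|^{-3/2}$ decay. Collecting the leading constant $\sqrt{\omega}e^{\omega/\tau}/(\pi\sigma S_0)$, the factor $B^{1/2}$, and the powers of $\sigma$ coming from $A^{-1/2}$ and from $\sqrt{\pi/(2z)}$, and using $B^{1/4}=\sigma^{-1/2}(\tilde\mu^2+\omega\sigma^2/\tau^2)^{1/4}$, every stray power of $\sigma$ cancels and the constant collapses to $\sqrt{\omega}e^{\omega/\tau}(\tilde\mu^2+\omega\sigma^2/\tau^2)^{1/4}/(\sqrt{2\pi}\,S_0)$, matching both displayed asymptotics.

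I expect the main obstacle to be the careful sign bookkeeping in converting $e^{-z}$ into a power of $x/S_0$---the single source of the difference between the two exponents---together with the somewhat delicate cancellation of the powers of $\sigma$ in the prefactor. Neither step is conceptually hard, but both must be executed precisely to land on the clean combination $(\tilde\mu^2+\omega\sigma^2/\tau^2)^{1/4}$ rather than a form still cluttered with factors of $\sigma$.
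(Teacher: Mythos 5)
Your approach is the right one and is exactly what the paper intends: the paper states this proposition without a proof, expecting the reader to repeat the argument of Proposition~\ref{prop:gamma}(a),(b) --- both limits force $|\ln(x/S_0)|\to\infty$, so Proposition~\ref{prop:K}(b) in Appendix~\ref{apdx} applies to $K_1$, and the resulting $e^{-z}$ is rewritten as a power of $x/S_0$, the sign of $\ln(x/S_0)$ producing the two different exponents. Your prefactor bookkeeping is also correct: $A^{-1/2}B^{1/2}\cdot\sqrt{\pi/(2z)}\,(AB)^{-1/4}$ does collapse, after inserting $A\simeq(\ln(x/S_0))^2/\sigma^2$ and $B=(\tilde\mu^2+\omega\sigma^2/\tau^2)/\sigma^2$, to the stated constant times $|\ln(x/S_0)|^{-3/2}$ with all powers of $\sigma$ cancelling.

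One step, however, is stated too loosely, and it is precisely the point where the $\IG$ case differs from the $\Gam$ case you are copying. You write $e^{-z}=(x/S_0)^{\mp\sqrt{\tilde\mu^2+\omega\sigma^2/\tau^2}/\sigma^2}$ after replacing $z=\sqrt{AB}$ by its asymptotic equivalent $\tilde z=\bigl(|\ln(x/S_0)|/\sigma^2\bigr)\sqrt{\tilde\mu^2+\omega\sigma^2/\tau^2}$. In the gamma case this is an exact identity, because $\chi=0$ makes the Bessel argument exactly proportional to $|\ln(x/S_0)|$; here $\omega>0$ sits inside $A$, so $z\neq\tilde z$, and relative asymptotics $z\simeq\tilde z$ do \emph{not} by themselves give $e^{-z}\simeq e^{-\tilde z}$ (for $z=L+1$, $\tilde z=L$ the ratio of the exponentials is $e^{-1}$, not $1$). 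What saves the argument is that the \emph{difference} tends to zero: with $L=|\ln(x/S_0)|$,
\[
z-\tilde z=\sqrt{B}\left(\sqrt{\frac{L^2}{\sigma^2}+\omega}-\frac{L}{\sigma}\right)
=\sqrt{B}\,\frac{\omega}{\sqrt{L^2/\sigma^2+\omega}+L/\sigma}
=O\!\left(\frac{1}{L}\right)\to0 ,
\]
so $e^{-z}/e^{-\tilde z}\to1$ and the substitution inside the exponential is legitimate; for the algebraic factors $A^{-3/4}\simeq(L/\sigma)^{-3/2}$ relative asymptotics suffice. With this one-line estimate added, your proof is complete and coincides with the paper's (implicit) argument.
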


From this proposition, $\lim_{x\to0+}f_{\IG}(x)=0$ holds if and only if
\[
\frac{\tilde\mu+\sqrt{\tilde\mu^2+\omega\sigma^2/\tau^2}}{\sigma^2}>1.
\]
This inequality is reduced to
\[
2\frac{\tilde\mu}{\sigma^2}+\frac{\omega}{\tau^2\sigma^2}>1.
\]

\begin{figure}[t!]\centering
\raisebox{36mm}{(a)}\hspace{-3mm}
\includegraphics[scale=0.8]{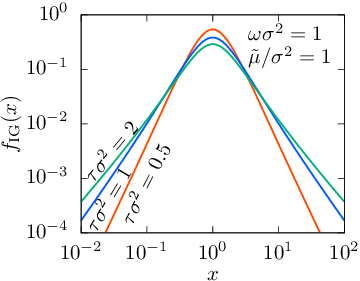}
\raisebox{36mm}{(b)}\hspace{-3mm}
\includegraphics[scale=0.8]{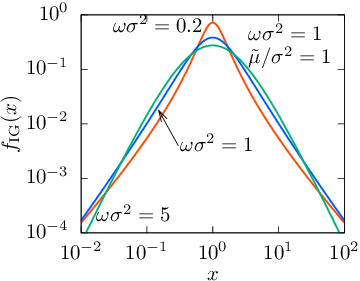}
\raisebox{36mm}{(c)}\hspace{-3mm}
\includegraphics[scale=0.8]{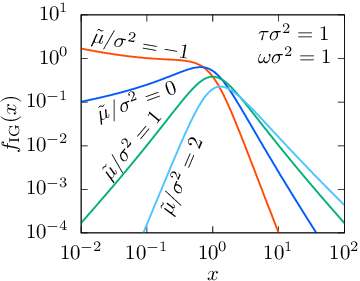}
\caption{
Log-log graphs of $f_{\IG}(x)$ for $S_0=1$.
(a) Dependence on $\tau\sigma^2$: $\tau\sigma^2=0.5$, $1$, and $2$ with $\omega\sigma^2=1$ and $\tilde\mu/\sigma^2=1$;
(b) dependence on $\omega\sigma^2$: $\omega\sigma^2=0.2$, $1$, and $5$ with $\tau\sigma^2=1$ and $\tilde\mu/\sigma^2=1$.
(c) dependence on $\tilde\mu/\sigma^2$: $\tilde\mu/\sigma^2=-1$, $0$, $1$, and $2$ with $\tau\sigma^2=1$ and $\omega\sigma^2=1$.
}
\label{fig4}
\end{figure}

Figure~\ref{fig4} shows the parameter dependence of $f_{\IG}(x)$ for $S_0=1$ on a log-log scale. 
The peak height at $x=S_0$ decreases for large $\tau\sigma^2$ and $\omega\sigma^2$, as shown in Figs.~\ref{fig4}(a) and (b), respectively.
In Fig.~\ref{fig4}(c), the parameter $\tilde\mu/\sigma^2$ changes the left-right symmetry across the axis $x=S_0$, as with $f_{\Gam}(x)$ in Fig.~\ref{fig1}(b) and $f_{\InvGamma}(x)$ in Fig.~\ref{fig3}(b).

A nontrivial relation between $f_{\InvGamma}(x)$ and $f_{\IG}(x)$ exists.
Consider $f_{\InvGamma}(x)$ having the parameters $\tilde\mu\ne0$, $\sigma^2$, $k$, and $\theta$.
We substitute $k=1/2$ and $\theta=\omega/2$ and introduce $\tau$ such that $\tau>\sigma\sqrt{2\theta}/|\tilde\mu|$.
Moreover, we introduce
\[
\tilde\mu'=\left(1-\frac{2\sigma^2\theta}{\tilde\mu^2\tau^2}\right)\tilde\mu,\quad
\sigma'^2=\left(1-\frac{2\sigma^2\theta}{\tilde\mu^2\tau^2}\right)\sigma^2,
\]
where $\sigma'^2>0$ is guaranteed by the choice of $\tau$.
Owing to
\[
\frac{\tilde\mu}{\sigma^2}=\frac{\tilde\mu'}{\sigma'^2},\quad
\frac{\tilde\mu^2}{\sigma^2}=\frac{\tilde\mu'^2}{\sigma'^2}+\frac{\omega}{\tau^2},
\]
$f_{\InvGamma}(x)$ becomes $f_{\IG}(x)$ with $T\sim\IG(\tau,\omega)$ and $(\tilde\mu, \sigma^2)$ replaced with $(\tilde\mu', \sigma'^2)$.
Although there exists no direct transformation between the inverse Gamma distribution and the inverse Gaussian distribution, we can construct the relation between PDFs $f_{\InvGamma}(x)$ and $f_{\IG}(x)$ of $S_T$ involving the change of $\tilde\mu$ and $\sigma^2$.

In conclusion, we have generalized the framework of geometric Brownian motion observed at an exponentially distributed time, which yields the double Pareto distribution having a power-law tail, by replacing the exponential distribution with more general ones.
Theorem~\ref{thm1} provides a formula for moments of the observed geometric Brownian motion under an arbitrary observation time distribution.
Furthermore, we have shown that the generalized inverse Gaussian (GIG) distribution forms a class of observation time distributions for which the PDF of the observed geometric Brownian motion can be calculated in a closed form (Theorem~\ref{thm2}).
The GIG distribution includes the exponential, gamma, inverse gamma, and inverse Gaussian distributions as special cases, and Theorem~\ref{thm2} can be regarded as a generalization of the double Pareto distribution.

\appendix
\section{Properties and formulas for the modified Bessel function of the second kind}\label{apdx}
The useful properties of the modified Bessel function of the second kind, $K_\nu(z)$, are presented.
Here, we define the relation
\[
f(z)\simeq g(z)\quad(z\to a)
\]
as
\[
\lim_{z\to a}\frac{f(z)}{g(z)}=1.
\]

\begin{proposition}[Properties of $K_\nu(z)$~\cite{Olver}]
\begin{enumerate}
\renewcommand{\labelenumi}{$\mathrm{(\alph{enumi})}$}
\item $K_\nu(z)=K_{-\nu}(z)$.
\item Asymptotic form in $z\to\infty$:
\[
K_\nu(z)\simeq \sqrt{\frac{\pi}{2z}}e^{-z}\quad (z\to\infty).
\]
\item Asymptotic form in $z\to0+$:
\[
K_\nu(z)\simeq
\begin{dcases}
\frac{1}{2}\Gamma(\nu)\left(\frac{z}{2}\right)^{-\nu} & \nu>0,\\
-\ln z & \nu=0
\end{dcases}
\quad (z\to0+).
\]
\item For $\nu=1/2$,
\[
K_{1/2}(z)=\sqrt{\frac{\pi}{2z}}e^{-z}.
\]
\item Derivative:
\[
\frac{dK_\nu(z)}{dz}=-\frac{\nu}{z}K_\nu(z)-K_{\nu-1}(z).
\]
\end{enumerate}
\label{prop:K}
\end{proposition}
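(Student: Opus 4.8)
The plan is to read all five properties off a single integral representation. Taking $p=1$ and $a=z^2$ in Lemma~\ref{lemma1} and rearranging gives, for $z>0$,
\[
K_\nu(z)=\frac{1}{2}\left(\frac{z}{2}\right)^{-\nu}\int_0^\infty t^{\nu-1}\exp\left(-t-\frac{z^2}{4t}\right)dt,
\]
which I would adopt as the working definition of $K_\nu(z)$ (a standard integral representation, consistent with \cite{Olver}); each of (a)--(e) then reduces to a manipulation of this integral.

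For the algebraic identities (a), (d), (e) I expect no difficulty. Property (a) follows from the involution $t\mapsto z^2/(4t)$, which leaves the exponent $-t-z^2/(4t)$ invariant while converting $t^{\nu-1}\,dt$ into $t^{-\nu-1}\,dt$ together with a factor $(z/2)^{2\nu}$ that combines with the prefactor $(z/2)^{-\nu}$ to reproduce exactly the representation of $K_{-\nu}$. Property (d) is the case $\nu=1/2$, for which the integral is the elementary Laplace transform of Eq.~\eqref{eq:DP_formula} with $p=1$, $a=z^2$, giving $\int_0^\infty t^{-1/2}e^{-t-z^2/(4t)}dt=\sqrt{\pi}\,e^{-z}$; the prefactor $\tfrac12(z/2)^{-1/2}$ then collapses this to $\sqrt{\pi/(2z)}\,e^{-z}$. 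Property (e) follows by differentiating under the integral sign: the derivative of the prefactor produces the $-\tfrac{\nu}{z}K_\nu(z)$ term, while $\partial_z\exp(-z^2/(4t))=-\tfrac{z}{2t}\exp(-z^2/(4t))$ lowers the power of $t$ by one, and identifying the resulting integral with the representation of $K_{\nu-1}$ supplies the $-K_{\nu-1}(z)$ term.

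For the asymptotics I would use standard analytic techniques. Property (b) follows from Laplace's method applied to $\int_0^\infty t^{\nu-1}e^{\phi(t)}\,dt$ with $\phi(t)=-t-z^2/(4t)$: the exponent has a unique interior maximum at $t_0=z/2$ with $\phi(t_0)=-z$ and $\phi''(t_0)=-4/z$, so the Gaussian approximation about $t_0$ yields $(z/2)^{\nu-1}e^{-z}\sqrt{\pi z/2}$, which after multiplication by $\tfrac12(z/2)^{-\nu}$ reduces to $\sqrt{\pi/(2z)}\,e^{-z}$. Property (c) for $\nu>0$ is immediate from monotone (or dominated) convergence: as $z\to0+$ the integrand increases to $t^{\nu-1}e^{-t}$, whose integral is $\Gamma(\nu)$, so that $K_\nu(z)\simeq\tfrac12\Gamma(\nu)(z/2)^{-\nu}$.

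The main obstacle is the $\nu=0$ case of (c), where the representation degenerates to $K_0(z)=\tfrac12\int_0^\infty t^{-1}e^{-t-z^2/(4t)}\,dt$ and the limiting integrand $t^{-1}e^{-t}$ is no longer integrable at the origin. Here I would isolate the logarithmic divergence by splitting the range at $t=z$: on $t\gtrsim z$ the factor $e^{-z^2/(4t)}$ is close to $1$ and $\int_z^{1}t^{-1}\,dt\sim-\ln z$ dominates, while the region $t\lesssim z$ and the $e^{-t}$ tail contribute only $O(1)$, giving $K_0(z)\simeq-\ln z$. This case, together with a careful bound on the error term in Laplace's method for (b), is the only part that demands more than routine bookkeeping; the algebraic properties (a), (d), (e) are essentially immediate from the integral representation.
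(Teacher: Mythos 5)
The paper itself gives no proof of this proposition (it cites Olver et al.), and its Remark~\ref{remark:K} goes in the opposite logical direction, taking the ODE-plus-asymptotics characterization of $K_\nu$ as given and proving Lemma~\ref{lemma1} from it. Your plan of reading all five properties off the representation $K_\nu(z)=\tfrac12(z/2)^{-\nu}\int_0^\infty t^{\nu-1}e^{-t-z^2/(4t)}dt$ obtained from Lemma~\ref{lemma1} is therefore a legitimate, self-contained route, and your arguments for (a), (d), (e), and for (c) with $\nu>0$ are correct. For (b), one caveat: as written you apply Laplace's method to a phase $-t-z^2/(4t)$ whose maximum location and curvature themselves depend on $z$, so the standard theorem does not apply verbatim; substitute $t=(z/2)s$ first, which gives $\tfrac12\int_0^\infty s^{\nu-1}e^{-(z/2)(s+1/s)}ds$ with a fixed phase and explicit large parameter $z/2$ --- precisely how the paper handles the same asymptotics for $\mathcal{K}_\nu$ in Remark~\ref{remark:K}.

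The genuine error is in the $\nu=0$ case of (c). You split $\int_0^\infty t^{-1}e^{-t-z^2/(4t)}dt$ at $t=z$ and assert that the region $t\lesssim z$ contributes only $O(1)$. It does not: the involution $t\mapsto z^2/(4t)$ --- the same one you used for (a) --- maps $(0,z/2)$ bijectively onto $(z/2,\infty)$, preserving both the exponent and the measure $t^{-1}dt$, so the small-$t$ region carries a second full logarithm and the integral is $-2\ln z+O(1)$, not $-\ln z+O(1)$. With your accounting the prefactor $\tfrac12$ would yield $K_0(z)\simeq-\tfrac12\ln z$, contradicting the very property you are proving; your stated conclusion $K_0(z)\simeq-\ln z$ does not follow from your estimates. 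The repair is short and uses the symmetry you already have: $\int_0^\infty t^{-1}e^{-t-z^2/(4t)}dt=2\int_{z/2}^\infty t^{-1}e^{-t-z^2/(4t)}dt$, and on $[z/2,\infty)$ one has $0\le z^2/(4t)\le z/2$, hence $e^{-z^2/(4t)}=1+O(z)$ uniformly, so the integral equals $2\bigl(1+O(z)\bigr)\int_{z/2}^\infty t^{-1}e^{-t}dt=-2\ln z+O(1)$, giving $K_0(z)\simeq-\ln z$ as required.
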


\begin{remark}\label{remark:K}
According to Olver et al.~\cite{Olver}, the modified Bessel function of the second kind $K_\nu(z)$ is characterized as satisfying the modified Bessel differential equation
\begin{equation}
z^2\frac{d^2 K_\nu(z)}{dz^2}+z\frac{d K_\nu(z)}{dz}-(z^2+\nu^2)K_\nu(z)=0
\label{eq:Bessel1}
\end{equation}
and having the asymptotic form
\begin{equation}
K_\nu(z)\simeq \sqrt{\frac{\pi}{2z}}e^{-z}\quad (z\to\infty).
\label{eq:Bessel2}
\end{equation}
By adopting this characterization, we can provide a brief proof of Lemma~\ref{lemma1}.

\begin{proof}[Proof of Lemma~\ref{lemma1} assuming Eqs.~\eqref{eq:Bessel1} and \eqref{eq:Bessel2} as the characterization of $K_\nu(z)$]
We set
\begin{equation}
\mathcal{K}_\nu(z)= 2^{\nu-1}\int_0^\infty t^{\nu-1}\exp\left(-zt-\frac{z}{4t}\right)dt.
\label{eq:intBessel}
\end{equation}
Notably, Lemma~\ref{lemma1} is equivalent to $\mathcal{K}_\nu(z)=K_\nu(z)$.
In fact, $\mathcal{K}_\nu(z)=K_\nu(z)$ is obtained by substituting $a=p=z$ in Lemma~\ref{lemma1}.
Conversely, assuming $\mathcal{K}_\nu(z)=K_\nu(z)$, Lemma~\ref{lemma1} is obtained by setting $z=\sqrt{ap}$ and introducing the substitution $t=\sqrt{a/p}u$ in Eq.~\eqref{eq:intBessel}.
Thus, to prove Lemma~\ref{lemma1}, it suffices to demonstrate that the function $\mathcal{K}_\nu(z)$ satisfies the modified Bessel equation~\eqref{eq:Bessel1} and the asymptotic form~\eqref{eq:Bessel2}.

First, we show that $\mathcal{K}_\nu(z)$ satisfies the modified Bessel equation~\eqref{eq:Bessel1}.
The derivative of $\mathcal{K}_\nu(z)$ becomes
\begin{align*}
\frac{d \mathcal{K}_\nu(z)}{dz}&=2^{\nu-1}\int_0^\infty t^{\nu-1}\left(-t-\frac{1}{4t}\right)\exp\left(-zt-\frac{z}{4t}\right)dt\\
&=\frac{2^{\nu-1}}{z}\int_0^\infty t^{\nu}\left(-z+\frac{z}{4t^2}\right)\exp\left(-zt-\frac{z}{4t}\right)dt-2^{\nu-2}\int_0^\infty t^{\nu-2}\exp\left(-zt-\frac{z}{4t}\right)dt\\
&=\frac{2^{\nu-1}}{z}\int_0^\infty t^\nu \frac{d}{dt}\exp\left(-zt-\frac{z}{4t}\right) dt-2^{\nu-2}\int_0^\infty t^{\nu-2}\exp\left(-zt-\frac{z}{4t}\right)dt\\
&=-\frac{\nu}{z}\mathcal{K}_\nu(z)-\mathcal{K}_{\nu-1}(z),
\end{align*}
where the first integral is calculated using integration by parts in the last equality.
Then, we obtain
\[
z^2\frac{d^2\mathcal{K}_\nu(z)}{dz^2}+z\frac{d\mathcal{K}_\nu(z)}{dz}
=\nu^2\mathcal{K}_\nu(z)+z(\nu-1)\mathcal{K}_{\nu-1}(z)-z^2\frac{d\mathcal{K}_{\nu-1}(z)}{dz}.
\]
The relation $\mathcal{K}_{\nu-1}(z)=\mathcal{K}_{1-\nu}(z)$ (corresponding to Proposition~\ref{prop:K}(a)) holds because
\[
\mathcal{K}_{\nu-1}(z)=2^{(1-\nu)-1}\int_0^\infty u^{(1-\nu)-1}\exp\left(-zu-\frac{z}{4u}\right)du=\mathcal{K}_{1-\nu}(z),
\]
where $u=(4t)^{-1}$ is introduced.
By using this property, the derivative of $\mathcal{K}_{\nu-1}(z)$ can be expressed as
\[
\frac{d\mathcal{K}_{\nu-1}(z)}{dz}=\frac{d\mathcal{K}_{1-\nu}(z)}{dz}=-\frac{1-\nu}{z}\mathcal{K}_{1-\nu}(z)-\mathcal{K}_{-\nu}(z)
=\frac{\nu-1}{z}\mathcal{K}_{\nu-1}(z)-\mathcal{K}_\nu(z).
\]
Therefore, $\mathcal{K}_\nu(z)$ satisfies the modified Bessel differential equation:
\[
z^2\frac{d^2\mathcal{K}_\nu(z)}{dz^2}+z\frac{d\mathcal{K}_\nu(z)}{dz}=(z^2+\nu^2)\mathcal{K}_\nu(z).
\]

Next, the asymptotic form as $z\to\infty$ is obtained using Laplace's method~\cite{Bender}.
The function $h(t)=-t-(4t)^{-1}$ takes the maximum value $h(1/2)=-1$ at $t=1/2$ and $h''(1/2)=-4$.
Therefore,
\[
\mathcal{K}_\nu(z)=2^{\nu-1}\int_0^\infty t^{\nu-1}e^{zh(t)}dt
\simeq 2^{\nu-1}\sqrt{\frac{2\pi}{z|h''(1/2)|}}\left(\frac{1}{2}\right)^{\nu-1}e^{zh(1/2)}
=\sqrt{\frac{\pi}{2z}}e^{-z}.
\]

Thus, $\mathcal{K}_\nu(z)$ defined in Eq.~\eqref{eq:intBessel} is proved to be identical to the modified Bessel function of the second kind $K_\nu(z)$.
\end{proof}
\end{remark}

\section*{Acknowledgments}
The authors are grateful to an anonymous reviewer for suggesting the relation to subordinated stochastic processes.
This study was supported by a Grant-in-Aid for Scientific Research (C) 19K03656 from Japan Society for the Promotion of Science.

\section*{Data availability}
Data sharing is not applicable to this article as no data sets were created or analyzed in this study.

\bibliography{yamamoto}

\begin{thebibliography}{47}%
\makeatletter
\providecommand \@ifxundefined [1]{%
 \@ifx{#1\undefined}
}%
\providecommand \@ifnum [1]{%
 \ifnum #1\expandafter \@firstoftwo
 \else \expandafter \@secondoftwo
 \fi
}%
\providecommand \@ifx [1]{%
 \ifx #1\expandafter \@firstoftwo
 \else \expandafter \@secondoftwo
 \fi
}%
\providecommand \natexlab [1]{#1}%
\providecommand \enquote  [1]{``#1''}%
\providecommand \bibnamefont  [1]{#1}%
\providecommand \bibfnamefont [1]{#1}%
\providecommand \citenamefont [1]{#1}%
\providecommand \href@noop [0]{\@secondoftwo}%
\providecommand \href [0]{\begingroup \@sanitize@url \@href}%
\providecommand \@href[1]{\@@startlink{#1}\@@href}%
\providecommand \@@href[1]{\endgroup#1\@@endlink}%
\providecommand \@sanitize@url [0]{\catcode `\\12\catcode `\$12\catcode
  `\&12\catcode `\#12\catcode `\^12\catcode `\_12\catcode `\%12\relax}%
\providecommand \@@startlink[1]{}%
\providecommand \@@endlink[0]{}%
\providecommand \url  [0]{\begingroup\@sanitize@url \@url }%
\providecommand \@url [1]{\endgroup\@href {#1}{\urlprefix }}%
\providecommand \urlprefix  [0]{URL }%
\providecommand \Eprint [0]{\href }%
\providecommand \doibase [0]{https://doi.org/}%
\providecommand \selectlanguage [0]{\@gobble}%
\providecommand \bibinfo  [0]{\@secondoftwo}%
\providecommand \bibfield  [0]{\@secondoftwo}%
\providecommand \translation [1]{[#1]}%
\providecommand \BibitemOpen [0]{}%
\providecommand \bibitemStop [0]{}%
\providecommand \bibitemNoStop [0]{.\EOS\space}%
\providecommand \EOS [0]{\spacefactor3000\relax}%
\providecommand \BibitemShut  [1]{\csname bibitem#1\endcsname}%
\let\auto@bib@innerbib\@empty
\bibitem [{\citenamefont {Zwanzig}(2001)}]{Zwanzig}%
  \BibitemOpen
  \bibfield  {author} {\bibinfo {author} {\bibfnamefont {R.}~\bibnamefont
  {Zwanzig}},\ }\href@noop {} {\emph {\bibinfo {title} {Nonequilibrium
  Statistical Mechanics}}}\ (\bibinfo  {publisher} {Oxford University Press},\
  \bibinfo {address} {Oxford},\ \bibinfo {year} {2001})\BibitemShut {NoStop}%
\bibitem [{\citenamefont {Katori}(2016)}]{Katori}%
  \BibitemOpen
  \bibfield  {author} {\bibinfo {author} {\bibfnamefont {M.}~\bibnamefont
  {Katori}},\ }\href@noop {} {\emph {\bibinfo {title} {Bessel Process,
  {S}chramm--{L}oewner Evolution, and the {D}yson Model}}}\ (\bibinfo
  {publisher} {Springer},\ \bibinfo {address} {Singapore},\ \bibinfo {year}
  {2016})\BibitemShut {NoStop}%
\bibitem [{\citenamefont {Paul}\ and\ \citenamefont {Baschnagel}(1999)}]{Paul}%
  \BibitemOpen
  \bibfield  {author} {\bibinfo {author} {\bibfnamefont {W.}~\bibnamefont
  {Paul}}\ and\ \bibinfo {author} {\bibfnamefont {J.}~\bibnamefont
  {Baschnagel}},\ }\href@noop {} {\emph {\bibinfo {title} {Stochastic
  Processes}}}\ (\bibinfo  {publisher} {Springer},\ \bibinfo {address}
  {Berlin},\ \bibinfo {year} {1999})\BibitemShut {NoStop}%
\bibitem [{\citenamefont {{\O}ksendal}(2013)}]{Oksendal}%
  \BibitemOpen
  \bibfield  {author} {\bibinfo {author} {\bibfnamefont {B.}~\bibnamefont
  {{\O}ksendal}},\ }\href@noop {} {\emph {\bibinfo {title} {Stochastic
  Differential Equations}}}\ (\bibinfo  {publisher} {Springer},\ \bibinfo
  {address} {Berlin},\ \bibinfo {year} {2013})\BibitemShut {NoStop}%
\bibitem [{\citenamefont {Reddy}\ and\ \citenamefont {Clinton}(2016)}]{Reddy}%
  \BibitemOpen
  \bibfield  {author} {\bibinfo {author} {\bibfnamefont {K.}~\bibnamefont
  {Reddy}}\ and\ \bibinfo {author} {\bibfnamefont {V.}~\bibnamefont
  {Clinton}},\ }\bibfield  {title} {\enquote {\bibinfo {title} {Simulating
  stock prices using geometric {B}rownian motion: evidence from {A}ustralian
  companies},}\ }\href@noop {} {\bibfield  {journal} {\bibinfo  {journal}
  {Australian Accounting, Business and Finance Journal}\ }\textbf {\bibinfo
  {volume} {10}},\ \bibinfo {pages} {23--47} (\bibinfo {year}
  {2016})}\BibitemShut {NoStop}%
\bibitem [{\citenamefont {Abidin}\ and\ \citenamefont {Jaffar}(2014)}]{Abidin}%
  \BibitemOpen
  \bibfield  {author} {\bibinfo {author} {\bibfnamefont {S.~N.~Z.}\
  \bibnamefont {Abidin}}\ and\ \bibinfo {author} {\bibfnamefont {M.~M.}\
  \bibnamefont {Jaffar}},\ }\bibfield  {title} {\enquote {\bibinfo {title}
  {Forecasting share prices of small size companies in {B}ursa {M}alaysia using
  geometric {B}rownian motion},}\ }\href@noop {} {\bibfield  {journal}
  {\bibinfo  {journal} {Appl.\ Math.\ Inf.\ Sci.}\ }\textbf {\bibinfo {volume}
  {8}},\ \bibinfo {pages} {107--112} (\bibinfo {year} {2014})}\BibitemShut
  {NoStop}%
\bibitem [{\citenamefont {Heston}(1993)}]{Heston}%
  \BibitemOpen
  \bibfield  {author} {\bibinfo {author} {\bibfnamefont {S.~L.}\ \bibnamefont
  {Heston}},\ }\bibfield  {title} {\enquote {\bibinfo {title} {A closed-form
  solution for options with stochastic volatility with applications to bond and
  currency options},}\ }\href@noop {} {\bibfield  {journal} {\bibinfo
  {journal} {Rev.\ Financial Stud.}\ }\textbf {\bibinfo {volume} {6}},\
  \bibinfo {pages} {327--343} (\bibinfo {year} {1993})}\BibitemShut {NoStop}%
\bibitem [{\citenamefont {Dufresne}(2001)}]{Dufresne}%
  \BibitemOpen
  \bibfield  {author} {\bibinfo {author} {\bibfnamefont {D.}~\bibnamefont
  {Dufresne}},\ }\bibfield  {title} {\enquote {\bibinfo {title} {The integral
  of geometric {B}rownian motion},}\ }\href@noop {} {\bibfield  {journal}
  {\bibinfo  {journal} {Adv.\ Appl.\ Prob.}\ }\textbf {\bibinfo {volume}
  {33}},\ \bibinfo {pages} {223--241} (\bibinfo {year} {2001})}\BibitemShut
  {NoStop}%
\bibitem [{\citenamefont {Byczkowski}\ and\ \citenamefont
  {Ryznar}(2006)}]{Byczkowski}%
  \BibitemOpen
  \bibfield  {author} {\bibinfo {author} {\bibfnamefont {T.}~\bibnamefont
  {Byczkowski}}\ and\ \bibinfo {author} {\bibfnamefont {M.}~\bibnamefont
  {Ryznar}},\ }\bibfield  {title} {\enquote {\bibinfo {title} {Hitting
  distributions of geometric {B}rownian motion},}\ }\href@noop {} {\bibfield
  {journal} {\bibinfo  {journal} {Studia Mathematica}\ }\textbf {\bibinfo
  {volume} {173}},\ \bibinfo {pages} {19--38} (\bibinfo {year}
  {2006})}\BibitemShut {NoStop}%
\bibitem [{\citenamefont {Biagini}\ \emph {et~al.}(2008)\citenamefont
  {Biagini}, \citenamefont {Hu}, \citenamefont {{\O}ksendal},\ and\
  \citenamefont {Zhang}}]{Biagini}%
  \BibitemOpen
  \bibfield  {author} {\bibinfo {author} {\bibfnamefont {F.}~\bibnamefont
  {Biagini}}, \bibinfo {author} {\bibfnamefont {Y.}~\bibnamefont {Hu}},
  \bibinfo {author} {\bibfnamefont {B.}~\bibnamefont {{\O}ksendal}},\ and\
  \bibinfo {author} {\bibfnamefont {T.}~\bibnamefont {Zhang}},\ }\href@noop {}
  {\emph {\bibinfo {title} {Stochastic Calculus for Fractional {B}rownian
  Motion and Applications}}}\ (\bibinfo  {publisher} {Springer},\ \bibinfo
  {address} {London},\ \bibinfo {year} {2008})\BibitemShut {NoStop}%
\bibitem [{\citenamefont {Appleby}, \citenamefont {Mao},\ and\ \citenamefont
  {Riedle}(2009)}]{Appleby}%
  \BibitemOpen
  \bibfield  {author} {\bibinfo {author} {\bibfnamefont {J.~A.~D.}\
  \bibnamefont {Appleby}}, \bibinfo {author} {\bibfnamefont {X.}~\bibnamefont
  {Mao}},\ and\ \bibinfo {author} {\bibfnamefont {M.}~\bibnamefont {Riedle}},\
  }\bibfield  {title} {\enquote {\bibinfo {title} {Geometric {B}rownian motion
  with delay: mean square characterisation},}\ }\href@noop {} {\bibfield
  {journal} {\bibinfo  {journal} {Proc.\ Amer.\ Math.\ Soc.}\ }\textbf
  {\bibinfo {volume} {137}},\ \bibinfo {pages} {339--348} (\bibinfo {year}
  {2009})}\BibitemShut {NoStop}%
\bibitem [{\citenamefont {Crow}\ and\ \citenamefont {Shimizu}(1988)}]{Crow}%
  \BibitemOpen
  \bibfield  {author} {\bibinfo {author} {\bibfnamefont {E.~L.}\ \bibnamefont
  {Crow}}\ and\ \bibinfo {author} {\bibfnamefont {K.}~\bibnamefont {Shimizu}},\
  }\href@noop {} {\emph {\bibinfo {title} {Lognormal distributions}}}\
  (\bibinfo  {publisher} {Marcel Dekker},\ \bibinfo {address} {New York},\
  \bibinfo {year} {1988})\BibitemShut {NoStop}%
\bibitem [{\citenamefont {Limpert}, \citenamefont {Stahel},\ and\ \citenamefont
  {Abbt}(2001)}]{Limpert}%
  \BibitemOpen
  \bibfield  {author} {\bibinfo {author} {\bibfnamefont {E.}~\bibnamefont
  {Limpert}}, \bibinfo {author} {\bibfnamefont {W.~A.}\ \bibnamefont
  {Stahel}},\ and\ \bibinfo {author} {\bibfnamefont {M.}~\bibnamefont {Abbt}},\
  }\bibfield  {title} {\enquote {\bibinfo {title} {Log-normal distributions
  across the sciences: keys and clues},}\ }\href@noop {} {\bibfield  {journal}
  {\bibinfo  {journal} {BioScience}\ }\textbf {\bibinfo {volume} {51}},\
  \bibinfo {pages} {341--352} (\bibinfo {year} {2001})}\BibitemShut {NoStop}%
\bibitem [{\citenamefont {Kobayashi}\ \emph {et~al.}(2011)\citenamefont
  {Kobayashi}, \citenamefont {Kuninaka}, \citenamefont {Wakita},\ and\
  \citenamefont {Matsushita}}]{Kobayashi}%
  \BibitemOpen
  \bibfield  {author} {\bibinfo {author} {\bibfnamefont {N.}~\bibnamefont
  {Kobayashi}}, \bibinfo {author} {\bibfnamefont {H.}~\bibnamefont {Kuninaka}},
  \bibinfo {author} {\bibfnamefont {J.}~\bibnamefont {Wakita}},\ and\ \bibinfo
  {author} {\bibfnamefont {M.}~\bibnamefont {Matsushita}},\ }\bibfield  {title}
  {\enquote {\bibinfo {title} {Statistical features of complex systems---toward
  establishing sociological physics---},}\ }\href@noop {} {\bibfield  {journal}
  {\bibinfo  {journal} {J. Phys.\ Soc.\ Jpn.}\ }\textbf {\bibinfo {volume}
  {80}},\ \bibinfo {pages} {072001} (\bibinfo {year} {2011})}\BibitemShut
  {NoStop}%
\bibitem [{\citenamefont {Yamamoto}\ and\ \citenamefont
  {Wakita}(2016)}]{Yamamoto2016}%
  \BibitemOpen
  \bibfield  {author} {\bibinfo {author} {\bibfnamefont {K.}~\bibnamefont
  {Yamamoto}}\ and\ \bibinfo {author} {\bibfnamefont {J.}~\bibnamefont
  {Wakita}},\ }\bibfield  {title} {\enquote {\bibinfo {title} {Analysis of a
  stochastic model for bacterial growth and the lognormality of the cell-size
  distribution},}\ }\href@noop {} {\bibfield  {journal} {\bibinfo  {journal}
  {J. Phys. Soc. Jpn.}\ }\textbf {\bibinfo {volume} {85}},\ \bibinfo {pages}
  {074004} (\bibinfo {year} {2016})}\BibitemShut {NoStop}%
\bibitem [{\citenamefont {Yamamoto}(2019)}]{Yamamoto2019}%
  \BibitemOpen
  \bibfield  {author} {\bibinfo {author} {\bibfnamefont {K.}~\bibnamefont
  {Yamamoto}},\ }\bibfield  {title} {\enquote {\bibinfo {title} {Analysis of
  size distributions of fictional characters: from \textit{{P}\'okemon} to
  \textit{{G}odzilla}},}\ }\href@noop {} {\bibfield  {journal} {\bibinfo
  {journal} {J. Phys. Soc. Jpn.}\ }\textbf {\bibinfo {volume} {88}},\ \bibinfo
  {pages} {113801} (\bibinfo {year} {2019})}\BibitemShut {NoStop}%
\bibitem [{\citenamefont {Bateman}(1954)}]{Bateman}%
  \BibitemOpen
  \bibfield  {author} {\bibinfo {author} {\bibfnamefont {H.}~\bibnamefont
  {Bateman}},\ }\href@noop {} {\emph {\bibinfo {title} {Tables of Integral
  Transforms, Vol.~1}}}\ (\bibinfo  {publisher} {McGraw-Hill},\ \bibinfo
  {address} {New York},\ \bibinfo {year} {1954})\BibitemShut {NoStop}%
\bibitem [{\citenamefont {Reed}\ and\ \citenamefont
  {Jorgensen}(2004)}]{Reed2004}%
  \BibitemOpen
  \bibfield  {author} {\bibinfo {author} {\bibfnamefont {W.~J.}\ \bibnamefont
  {Reed}}\ and\ \bibinfo {author} {\bibfnamefont {M.}~\bibnamefont
  {Jorgensen}},\ }\bibfield  {title} {\enquote {\bibinfo {title} {The double
  {P}areto-lognormal distribution---a new parametric model for size
  distributions},}\ }\href@noop {} {\bibfield  {journal} {\bibinfo  {journal}
  {Commn. Statist.}\ }\textbf {\bibinfo {volume} {33}},\ \bibinfo {pages}
  {1733--1753} (\bibinfo {year} {2004})}\BibitemShut {NoStop}%
\bibitem [{\citenamefont {Reed}(2001)}]{Reed2001}%
  \BibitemOpen
  \bibfield  {author} {\bibinfo {author} {\bibfnamefont {W.~J.}\ \bibnamefont
  {Reed}},\ }\bibfield  {title} {\enquote {\bibinfo {title} {The {P}areto,
  {Z}ipf and other power laws},}\ }\href@noop {} {\bibfield  {journal}
  {\bibinfo  {journal} {Econ.\ Lett.}\ }\textbf {\bibinfo {volume} {74}},\
  \bibinfo {pages} {15--19} (\bibinfo {year} {2001})}\BibitemShut {NoStop}%
\bibitem [{\citenamefont {Wang}\ \emph {et~al.}(2016)\citenamefont {Wang},
  \citenamefont {Guan}, \citenamefont {Qin},\ and\ \citenamefont
  {Yang}}]{Wang}%
  \BibitemOpen
  \bibfield  {author} {\bibinfo {author} {\bibfnamefont {C.}~\bibnamefont
  {Wang}}, \bibinfo {author} {\bibfnamefont {X.}~\bibnamefont {Guan}}, \bibinfo
  {author} {\bibfnamefont {T.}~\bibnamefont {Qin}},\ and\ \bibinfo {author}
  {\bibfnamefont {T.}~\bibnamefont {Yang}},\ }\bibfield  {title} {\enquote
  {\bibinfo {title} {Modeling heterogeneous and correlated human dynamics of
  online activities with double {P}areto distribution},}\ }\href@noop {}
  {\bibfield  {journal} {\bibinfo  {journal} {Information Sciences}\ }\textbf
  {\bibinfo {volume} {330}},\ \bibinfo {pages} {186--198} (\bibinfo {year}
  {2016})}\BibitemShut {NoStop}%
\bibitem [{\citenamefont {Levy}\ and\ \citenamefont {Solomon}(1996)}]{Levy}%
  \BibitemOpen
  \bibfield  {author} {\bibinfo {author} {\bibfnamefont {M.}~\bibnamefont
  {Levy}}\ and\ \bibinfo {author} {\bibfnamefont {S.}~\bibnamefont {Solomon}},\
  }\bibfield  {title} {\enquote {\bibinfo {title} {Power laws are logarithmic
  {B}oltzmann laws},}\ }\href@noop {} {\bibfield  {journal} {\bibinfo
  {journal} {Int.\ J. Mod.\ Phys.\ C}\ }\textbf {\bibinfo {volume} {7}},\
  \bibinfo {pages} {595--601} (\bibinfo {year} {1996})}\BibitemShut {NoStop}%
\bibitem [{\citenamefont {Gabaix}(1999)}]{Gabaix}%
  \BibitemOpen
  \bibfield  {author} {\bibinfo {author} {\bibfnamefont {X.}~\bibnamefont
  {Gabaix}},\ }\bibfield  {title} {\enquote {\bibinfo {title} {Zipf's law for
  cities: an explanation},}\ }\href@noop {} {\bibfield  {journal} {\bibinfo
  {journal} {Quarterly J. Econ.}\ }\textbf {\bibinfo {volume} {114}},\ \bibinfo
  {pages} {739--767} (\bibinfo {year} {1999})}\BibitemShut {NoStop}%
\bibitem [{\citenamefont {Yamamoto}\ and\ \citenamefont
  {Yamazaki}(2012)}]{Yamamoto2012}%
  \BibitemOpen
  \bibfield  {author} {\bibinfo {author} {\bibfnamefont {K.}~\bibnamefont
  {Yamamoto}}\ and\ \bibinfo {author} {\bibfnamefont {Y.}~\bibnamefont
  {Yamazaki}},\ }\bibfield  {title} {\enquote {\bibinfo {title} {Power-law
  behavior in a cascade process with stopping events: a solvable model},}\
  }\href@noop {} {\bibfield  {journal} {\bibinfo  {journal} {Phys.\ Rev.\ E}\
  }\textbf {\bibinfo {volume} {85}},\ \bibinfo {pages} {011145} (\bibinfo
  {year} {2012})}\BibitemShut {NoStop}%
\bibitem [{\citenamefont {Yamamoto}(2014)}]{Yamamoto2014}%
  \BibitemOpen
  \bibfield  {author} {\bibinfo {author} {\bibfnamefont {K.}~\bibnamefont
  {Yamamoto}},\ }\bibfield  {title} {\enquote {\bibinfo {title} {Stochastic
  model of {Z}ipf's law and the universality of the power-law exponent},}\
  }\href@noop {} {\bibfield  {journal} {\bibinfo  {journal} {Phys.\ Rev.\ E}\
  }\textbf {\bibinfo {volume} {89}},\ \bibinfo {pages} {042115} (\bibinfo
  {year} {2014})}\BibitemShut {NoStop}%
\bibitem [{\citenamefont {Sato}(2013)}]{Sato}%
  \BibitemOpen
  \bibfield  {author} {\bibinfo {author} {\bibfnamefont {K.}~\bibnamefont
  {Sato}},\ }\href@noop {} {\emph {\bibinfo {title} {L\'evy Processes and
  Infinitely Divisible Distributions}}}\ (\bibinfo  {publisher} {Campridge
  University Press},\ \bibinfo {address} {Cambridge},\ \bibinfo {year}
  {2013})\BibitemShut {NoStop}%
\bibitem [{\citenamefont {Metzler}\ and\ \citenamefont
  {Klafter}(2000)}]{Metzler}%
  \BibitemOpen
  \bibfield  {author} {\bibinfo {author} {\bibfnamefont {R.}~\bibnamefont
  {Metzler}}\ and\ \bibinfo {author} {\bibfnamefont {J.}~\bibnamefont
  {Klafter}},\ }\bibfield  {title} {\enquote {\bibinfo {title} {The random
  walk's guide to anomalous diffusion: a fractional dynamics approach},}\
  }\href@noop {} {\bibfield  {journal} {\bibinfo  {journal} {Phys. Rep.}\
  }\textbf {\bibinfo {volume} {339}},\ \bibinfo {pages} {1--77} (\bibinfo
  {year} {2000})}\BibitemShut {NoStop}%
\bibitem [{\citenamefont {Barkai}, \citenamefont {Metzler},\ and\ \citenamefont
  {Klafter}(2000)}]{Barkai}%
  \BibitemOpen
  \bibfield  {author} {\bibinfo {author} {\bibfnamefont {E.}~\bibnamefont
  {Barkai}}, \bibinfo {author} {\bibfnamefont {R.}~\bibnamefont {Metzler}},\
  and\ \bibinfo {author} {\bibfnamefont {J.}~\bibnamefont {Klafter}},\
  }\bibfield  {title} {\enquote {\bibinfo {title} {From continuous time random
  walks to the fractional {F}okker--{P}lanck equation},}\ }\href@noop {}
  {\bibfield  {journal} {\bibinfo  {journal} {Phys. Rev. E}\ }\textbf {\bibinfo
  {volume} {61}},\ \bibinfo {pages} {132--138} (\bibinfo {year}
  {2000})}\BibitemShut {NoStop}%
\bibitem [{\citenamefont {Beck}()}]{Beck}%
  \BibitemOpen
  \bibfield  {author} {\bibinfo {author} {\bibfnamefont {C.}~\bibnamefont
  {Beck}},\ }\href@noop {} {\ }\BibitemShut {NoStop}%
\bibitem [{\citenamefont {Toda}(2017)}]{Toda}%
  \BibitemOpen
  \bibfield  {author} {\bibinfo {author} {\bibfnamefont {A.~A.}\ \bibnamefont
  {Toda}},\ }\bibfield  {title} {\enquote {\bibinfo {title} {A note on the size
  distribution of consumption: more double {P}areto than lognormal},}\
  }\href@noop {} {\bibfield  {journal} {\bibinfo  {journal} {Macroeconomic
  Dynamics}\ }\textbf {\bibinfo {volume} {21}},\ \bibinfo {pages} {1508--1518}
  (\bibinfo {year} {2017})}\BibitemShut {NoStop}%
\bibitem [{\citenamefont {Seshadri}\ \emph {et~al.}(2008)\citenamefont
  {Seshadri}, \citenamefont {Machiraju}, \citenamefont {Bolot}, \citenamefont
  {Faloutsos},\ and\ \citenamefont {Leskovec}}]{Seshadri}%
  \BibitemOpen
  \bibfield  {author} {\bibinfo {author} {\bibfnamefont {M.}~\bibnamefont
  {Seshadri}}, \bibinfo {author} {\bibfnamefont {A.}~\bibnamefont {Machiraju},
  \bibfnamefont {Sridhar amd~Sridharan}}, \bibinfo {author} {\bibfnamefont
  {J.}~\bibnamefont {Bolot}}, \bibinfo {author} {\bibfnamefont
  {C.}~\bibnamefont {Faloutsos}},\ and\ \bibinfo {author} {\bibfnamefont
  {J.}~\bibnamefont {Leskovec}},\ }\bibfield  {title} {\enquote {\bibinfo
  {title} {Mobile call graphs: beyond power-law and lognormal distributions},}\
  }\href@noop {} {\bibfield  {journal} {\bibinfo  {journal} {Proc.\ 14th ACM
  SIGKDD International Conf. Knowledge Discovery and Data Mining}\ ,\ \bibinfo
  {pages} {596--604}} (\bibinfo {year} {2008})}\BibitemShut {NoStop}%
\bibitem [{\citenamefont {Yamamoto}\ \emph
  {et~al.}(2024{\natexlab{a}})\citenamefont {Yamamoto}, \citenamefont {Bando},
  \citenamefont {Yanagawa},\ and\ \citenamefont {Yamazaki}}]{Yamamoto2024}%
  \BibitemOpen
  \bibfield  {author} {\bibinfo {author} {\bibfnamefont {K.}~\bibnamefont
  {Yamamoto}}, \bibinfo {author} {\bibfnamefont {T.}~\bibnamefont {Bando}},
  \bibinfo {author} {\bibfnamefont {H.}~\bibnamefont {Yanagawa}},\ and\
  \bibinfo {author} {\bibfnamefont {Y.}~\bibnamefont {Yamazaki}},\ }\bibfield
  {title} {\enquote {\bibinfo {title} {Deformation of power law in the double
  {P}areto distribution using uniformly distributed observation time},}\
  }\href@noop {} {\bibfield  {journal} {\bibinfo  {journal} {J. Statistical
  Mech.}\ }\textbf {\bibinfo {volume} {2024}},\ \bibinfo {pages} {043405}
  (\bibinfo {year} {2024}{\natexlab{a}})}\BibitemShut {NoStop}%
\bibitem [{\citenamefont {Barndorff-Nielsen}\ and\ \citenamefont
  {Halgreen}(1977)}]{Nielsen}%
  \BibitemOpen
  \bibfield  {author} {\bibinfo {author} {\bibfnamefont {O.}~\bibnamefont
  {Barndorff-Nielsen}}\ and\ \bibinfo {author} {\bibfnamefont {C.}~\bibnamefont
  {Halgreen}},\ }\bibfield  {title} {\enquote {\bibinfo {title} {Infinite
  divisibility of the hyperbolic and generalized inverse {G}aussian
  distributions},}\ }\href@noop {} {\bibfield  {journal} {\bibinfo  {journal}
  {Z. Wahrscheinlichkeitstheorie verw. Gebiete}\ }\textbf {\bibinfo {volume}
  {38}},\ \bibinfo {pages} {309--311} (\bibinfo {year} {1977})}\BibitemShut
  {NoStop}%
\bibitem [{\citenamefont {Olver}\ \emph {et~al.}(2010)\citenamefont {Olver},
  \citenamefont {Lozier}, \citenamefont {Boisvert},\ and\ \citenamefont
  {Clark}}]{Olver}%
  \BibitemOpen
  \bibfield  {author} {\bibinfo {author} {\bibfnamefont {F.~W.~J.}\
  \bibnamefont {Olver}}, \bibinfo {author} {\bibfnamefont {D.~W.}\ \bibnamefont
  {Lozier}}, \bibinfo {author} {\bibfnamefont {R.~F.}\ \bibnamefont
  {Boisvert}},\ and\ \bibinfo {author} {\bibfnamefont {C.~W.}\ \bibnamefont
  {Clark}},\ }\href@noop {} {\emph {\bibinfo {title} {NIST Handbook of
  Mathematical Functions}}}\ (\bibinfo  {publisher} {Cambridge University
  Press},\ \bibinfo {address} {Cambridge},\ \bibinfo {year} {2010})\BibitemShut
  {NoStop}%
\bibitem [{\citenamefont {Seshadri}(1997)}]{ESS}%
  \BibitemOpen
  \bibfield  {author} {\bibinfo {author} {\bibfnamefont {V.}~\bibnamefont
  {Seshadri}},\ }\enquote {\bibinfo {title} {Halphen's laws},}\ in\ \href@noop
  {} {\emph {\bibinfo {booktitle} {Encyclopedia of Statistical Sciences, Update
  Volume 1}}}\ (\bibinfo  {publisher} {Wiley},\ \bibinfo {address} {New York},\
  \bibinfo {year} {1997})\ pp.\ \bibinfo {pages} {302--306}\BibitemShut
  {NoStop}%
\bibitem [{\citenamefont {Koudou}\ and\ \citenamefont {Ley}(2014)}]{Koudou}%
  \BibitemOpen
  \bibfield  {author} {\bibinfo {author} {\bibfnamefont {A.~E.}\ \bibnamefont
  {Koudou}}\ and\ \bibinfo {author} {\bibfnamefont {C.}~\bibnamefont {Ley}},\
  }\bibfield  {title} {\enquote {\bibinfo {title} {Characterizations of {GIG}
  laws: A survey},}\ }\href@noop {} {\bibfield  {journal} {\bibinfo  {journal}
  {Prob.\ Surveys}\ }\textbf {\bibinfo {volume} {11}},\ \bibinfo {pages}
  {161--176} (\bibinfo {year} {2014})}\BibitemShut {NoStop}%
\bibitem [{\citenamefont {J{\o}rgensen}(1982)}]{Jorgensen}%
  \BibitemOpen
  \bibfield  {author} {\bibinfo {author} {\bibfnamefont {B.}~\bibnamefont
  {J{\o}rgensen}},\ }\href@noop {} {\emph {\bibinfo {title} {Statistical
  Properties of the Generalized Inverse {G}aussian Distribution}}}\ (\bibinfo
  {publisher} {Springer},\ \bibinfo {address} {New York},\ \bibinfo {year}
  {1982})\BibitemShut {NoStop}%
\bibitem [{\citenamefont {Paolella}(2007)}]{Paolella}%
  \BibitemOpen
  \bibfield  {author} {\bibinfo {author} {\bibfnamefont {M.~S.}\ \bibnamefont
  {Paolella}},\ }\href@noop {} {\emph {\bibinfo {title} {Intermediate
  Probability}}}\ (\bibinfo  {publisher} {Wiley},\ \bibinfo {address}
  {Atrium},\ \bibinfo {year} {2007})\BibitemShut {NoStop}%
\bibitem [{\citenamefont {Gray}\ and\ \citenamefont {Mathews}(1922)}]{Gray}%
  \BibitemOpen
  \bibfield  {author} {\bibinfo {author} {\bibfnamefont {A.}~\bibnamefont
  {Gray}}\ and\ \bibinfo {author} {\bibfnamefont {G.~B.}\ \bibnamefont
  {Mathews}},\ }\href@noop {} {\emph {\bibinfo {title} {A Treatise on {B}essel
  Functions and Their Applications to Physics}}}\ (\bibinfo  {publisher}
  {Macmillan},\ \bibinfo {address} {London},\ \bibinfo {year}
  {1922})\BibitemShut {NoStop}%
\bibitem [{\citenamefont {Sharma}\ and\ \citenamefont {Singh}(2010)}]{Sharma}%
  \BibitemOpen
  \bibfield  {author} {\bibinfo {author} {\bibfnamefont {M.~A.}\ \bibnamefont
  {Sharma}}\ and\ \bibinfo {author} {\bibfnamefont {J.~B.}\ \bibnamefont
  {Singh}},\ }\bibfield  {title} {\enquote {\bibinfo {title} {Use of
  probability distribution in rainfall analysis},}\ }\href@noop {} {\bibfield
  {journal} {\bibinfo  {journal} {New York Sci.\ J.}\ }\textbf {\bibinfo
  {volume} {3}},\ \bibinfo {pages} {40--49} (\bibinfo {year}
  {2010})}\BibitemShut {NoStop}%
\bibitem [{\citenamefont {Wright}\ \emph {et~al.}(2014)\citenamefont {Wright},
  \citenamefont {Winter}, \citenamefont {Forster},\ and\ \citenamefont
  {Bleeck}}]{Wright}%
  \BibitemOpen
  \bibfield  {author} {\bibinfo {author} {\bibfnamefont {M.~C.~M.}\
  \bibnamefont {Wright}}, \bibinfo {author} {\bibfnamefont {I.~M.}\
  \bibnamefont {Winter}}, \bibinfo {author} {\bibfnamefont {J.~J.}\
  \bibnamefont {Forster}},\ and\ \bibinfo {author} {\bibfnamefont
  {S.}~\bibnamefont {Bleeck}},\ }\bibfield  {title} {\enquote {\bibinfo {title}
  {Response to best-frequency tone bursts in the ventral cochlear nucleus is
  governed by ordered inter-spike interval statistics},}\ }\href@noop {}
  {\bibfield  {journal} {\bibinfo  {journal} {Hearing Res.}\ }\textbf {\bibinfo
  {volume} {317}},\ \bibinfo {pages} {23--32} (\bibinfo {year}
  {2014})}\BibitemShut {NoStop}%
\bibitem [{\citenamefont {Park}\ and\ \citenamefont {Kim}(2016)}]{Park}%
  \BibitemOpen
  \bibfield  {author} {\bibinfo {author} {\bibfnamefont {S.~H.}\ \bibnamefont
  {Park}}\ and\ \bibinfo {author} {\bibfnamefont {J.~H.}\ \bibnamefont {Kim}},\
  }\bibfield  {title} {\enquote {\bibinfo {title} {Lifetime estimation of {LED}
  lamp using gamma process model},}\ }\href@noop {} {\bibfield  {journal}
  {\bibinfo  {journal} {Microelectronics Reliability}\ }\textbf {\bibinfo
  {volume} {57}},\ \bibinfo {pages} {71--78} (\bibinfo {year}
  {2016})}\BibitemShut {NoStop}%
\bibitem [{\citenamefont {Yamamoto}\ \emph
  {et~al.}(2024{\natexlab{b}})\citenamefont {Yamamoto}, \citenamefont {Uezu},
  \citenamefont {Kagawa}, \citenamefont {Yamazaki},\ and\ \citenamefont
  {Narizuka}}]{Yamamoto2024_1}%
  \BibitemOpen
  \bibfield  {author} {\bibinfo {author} {\bibfnamefont {K.}~\bibnamefont
  {Yamamoto}}, \bibinfo {author} {\bibfnamefont {S.}~\bibnamefont {Uezu}},
  \bibinfo {author} {\bibfnamefont {K.}~\bibnamefont {Kagawa}}, \bibinfo
  {author} {\bibfnamefont {Y.}~\bibnamefont {Yamazaki}},\ and\ \bibinfo
  {author} {\bibfnamefont {T.}~\bibnamefont {Narizuka}},\ }\bibfield  {title}
  {\enquote {\bibinfo {title} {Theory and data analysis of player and team ball
  possession time in football},}\ }\href@noop {} {\bibfield  {journal}
  {\bibinfo  {journal} {Phys.\ Rev.\ E}\ }\textbf {\bibinfo {volume} {109}},\
  \bibinfo {pages} {014305} (\bibinfo {year} {2024}{\natexlab{b}})}\BibitemShut
  {NoStop}%
\bibitem [{\citenamefont {Hoff}(2009)}]{Hoff}%
  \BibitemOpen
  \bibfield  {author} {\bibinfo {author} {\bibfnamefont {P.~D.}\ \bibnamefont
  {Hoff}},\ }\href@noop {} {\emph {\bibinfo {title} {A First Course in
  {B}ayesian Statistical Methods}}}\ (\bibinfo  {publisher} {Springer},\
  \bibinfo {address} {New York},\ \bibinfo {year} {2009})\BibitemShut {NoStop}%
\bibitem [{\citenamefont {Glen}(2011)}]{Glen}%
  \BibitemOpen
  \bibfield  {author} {\bibinfo {author} {\bibfnamefont {A.~G.}\ \bibnamefont
  {Glen}},\ }\bibfield  {title} {\enquote {\bibinfo {title} {On the inverse
  gamma as a survival distribution},}\ }\href@noop {} {\bibfield  {journal}
  {\bibinfo  {journal} {J. Quality Tech.}\ }\textbf {\bibinfo {volume} {43}},\
  \bibinfo {pages} {158--166} (\bibinfo {year} {2011})}\BibitemShut {NoStop}%
\bibitem [{\citenamefont {S.}\ and\ \citenamefont {Folks}(1989)}]{Chhikara}%
  \BibitemOpen
  \bibfield  {author} {\bibinfo {author} {\bibfnamefont {C.~R.}\ \bibnamefont
  {S.}}\ and\ \bibinfo {author} {\bibfnamefont {J.~L.}\ \bibnamefont {Folks}},\
  }\href@noop {} {\emph {\bibinfo {title} {The Inverse {G}aussian
  Distribution}}}\ (\bibinfo  {publisher} {Dekker},\ \bibinfo {address} {New
  York},\ \bibinfo {year} {1989})\BibitemShut {NoStop}%
\bibitem [{\citenamefont {Redner}(2001)}]{Redner}%
  \BibitemOpen
  \bibfield  {author} {\bibinfo {author} {\bibfnamefont {S.}~\bibnamefont
  {Redner}},\ }\href@noop {} {\emph {\bibinfo {title} {A Guide to First-passage
  Processes}}}\ (\bibinfo  {publisher} {Cambridge University Press},\ \bibinfo
  {address} {Cambridge},\ \bibinfo {year} {2001})\BibitemShut {NoStop}%
\bibitem [{\citenamefont {Bender}\ and\ \citenamefont {Orszag}(2013)}]{Bender}%
  \BibitemOpen
  \bibfield  {author} {\bibinfo {author} {\bibfnamefont {C.~M.}\ \bibnamefont
  {Bender}}\ and\ \bibinfo {author} {\bibfnamefont {S.~A.}\ \bibnamefont
  {Orszag}},\ }\href@noop {} {\emph {\bibinfo {title} {Advanced Mathematical
  Methods for Scientists and Engineers}}}\ (\bibinfo  {publisher} {Springer},\
  \bibinfo {address} {New York},\ \bibinfo {year} {2013})\BibitemShut {NoStop}%
\end{thebibliography}%

\end{document}